\newtheorem{prop}{Proposition}
\newtheorem{defn}{Definition}
\newtheorem{propert}{Properties}
\newtheorem{cor}{Corollary}
\newtheorem{exmp}{Example}
\newtheorem{obs}{Observation}
\newtheorem{rem}{Remark}
\begin{document}
	\begin{frontmatter}
		\title{Generating Functions for Lacunary Legendre and Legendre-like Polynomials}

		\author[Enea]{S. Licciardi\corref{cor}}
		\ead{silvia.licciardi@enea.it}
		
		\author[Enea]{G. Dattoli}
		\ead{giuseppe.dattoli@enea.it}
		
		\author[Unict]{R.M. Pidatella}
		\ead{rosa@dmi.unict.it}
		
		\address[Enea]{ENEA - Frascati Research Center, Via Enrico Fermi 45, 00044, Frascati, Rome, Italy}
		\cortext[cor]{Corresponding author: silviakant@gmail.com, silvia.licciardi@enea.it, orcid 0000-0003-4564-8866, tel. nr: +39 06 94005421. }
		\address[Unict]{Dep. of Mathematics and Computer Science, University of Catania, Viale A. Doria 6, 95125, Catania, Italy}
		
		\begin{abstract}
		The use of operational methods of different nature is shown to be a fairly powerful tool to study different problems regarding the theory of Legendre and Legendre-like polynomials. We show how the use of the well known integral representations linking Hermite and Legendre like polynomials and of operational technique allow the derivation of new properties regarding the generating functions, repeated derivatives of irrational functions and so on. We finally extend the method of the integral transform to functions with more than one variable and derive new expansion criteria of a given function in terms of Legendre polynomials.
		\end{abstract}
		
		\begin{keyword}
			{operators theory 44A99, 47B99, 47A62; umbral methods 05A40; special functions 33C52, 33C65, 33C99, 33B10, 33B15; Laguerre and Hermite polynomials 33C45, 42C05.}\end{keyword}
		
	\end{frontmatter}
	
	\section{Introduction}

The theory of special polynomials has benefitted in the past from different techniques, including algebraic \cite{Dattoli} and umbral method \cite{S.Roman}. In this paper we go back to the integral representation method \cite{L.C.Andrews} which allows the framing of different special polynomials and functions, as well, in terms of multivariable Hermite polynomials \cite{Babusci}. \\

\noindent The idea put forward in \cite{Babusci} is that of exploiting the wealth of properties of Hermite polynomial families to derive old and new identities regarding other polynomials belonging to standard and generalized forms of Legendre and Legendre like type. We will in particular deal with Chebyshev, Legendre, Jacobi{\dots} \cite{DattLoren}.\\

 The paper is intended to provide a comprehensive description of the use of the integral transform method and of the relevant link with the previously mentioned algebraic and umbral techniques.\\

  We start by providing an example of interplay between two variable Chebyshev polynomials of the second kind $U_{n} (x,y)$ and Hermite $H_{n} (x,y)$ polynomials ($HP$).
  
  \begin{exmp}
   We consider the integral representation for Chebyshev polynomials of the second kind $U_{n} (x,y)$ \cite{L.C.Andrews}
  
\begin{equation} \label{GrindEQ__1_} 
U_{n} (x,y)=\frac{1}{n!} \int _{0}^{\infty }e^{-s}  H_{n} (-s\, x,-sy)ds , \quad \forall x,y\in\mathbb{R}, \forall n\in\mathbb{N}.
\end{equation} 
Recalling $HP$ serie representation and its generating function \cite{Appel}

\begin{equation}\begin{split} \label{GrindEQ__2_} 
& H_{n} (x,y)=n!\, \sum _{r=0}^{\lfloor\frac{n}{2}\rfloor }\frac{x^{n-2\, r} y^{r} }{(n-2\, r)!\, r!}  ,\\ 
& \sum _{n=0}^{\infty }\frac{t^{n} }{n!}  H_{n} (x,y)=e^{x\, t+y\, t^{2} } 
\end{split} \end{equation} 
and using the definition of the Euler gamma function

\begin{equation} \label{GrindEQ__3_} 
\Gamma (\nu )=\int _{0}^{\infty }e^{-s}  s^{\nu -1} ds, \qquad Re(\nu)>0,
\end{equation} 
we end up with the explicit definition of $U_{n} (x,y)$ as

\begin{equation}\label{key}
U_{n} (x,y)=(-1)^{n} \sum _{r=0}^{\lfloor\frac{n}{2} \rfloor}\frac{(n-r)!\, x^{n-2r} (-y)^{r} }{(n-2\, r)!\, r!}.  
\end{equation}
 Multiplying both sides of eq. \eqref{GrindEQ__1_} by $t^{n} $ and then by summing up over the index $n$, we obtain the well known generating function of second kind Chebyshev polynomials, namely
 
\begin{equation}\begin{split}\label{GrindEQ__5_} 
 \sum _{n=0}^{\infty }t^{n}  U_{n} (x,y)&=\sum _{n=0}^{\infty }\frac{t^{n} }{n!} \int _{0}^{\infty }e^{-s}  H_{n}  (-s\, x,-s\, y)ds=\\
 & =\int _{0}^{\infty }e^{-s\, \left(1+xt+yt^{2} \right)}  ds=\frac{1}{1+x\, t+y\, t^{2} } ,\qquad \quad Re(1+x\, t+y\, t^{2})>0 .
\end{split}\end{equation} 
By using furthermore the generating function \cite{DattLoren}

\begin{equation} \label{GrindEQ__6_} 
\sum _{n=0}^{\infty }\frac{t^{n} }{n!}  H_{n+l} (x,y)=H_{l} (x+2\, y\, t,\, y)\, e^{x\, t+y\, t^{2} }  
\end{equation} 
and employing the same procedure as before, we also easily find that, $\forall \mid t \mid <1,\;\; Re(1+x\, t+y\, t^{2})>0$, 

\begin{equation}\label{GrindEQ__7_} 
 \sum _{n=0}^{\infty }t^{n}  \frac{(n+l)!}{n!} U_{n+l} (x,y)=l!\;\dfrac{U_{l} \left(\dfrac{x+2\, y\, t}{1+x\, t+y\, t^{2} } ,\, \dfrac{y}{1+x\, t+y\, t^{2} } \right)}{\left(1+x\, t+y\, t^{2} \right)}
 \end{equation} 
and indeed we get from eq. \eqref{GrindEQ__1_}

\begin{equation} \label{GrindEQ__8_} 
\sum _{n=0}^{\infty }t^{n}  \frac{(n+l)!}{n!} U_{n+l} (x,y)=\int _{0}^{\infty }e^{-s\, (1+xt+yt^{2} )}  H_{l} (-(x+2\;y\, t)s,-sy)\, ds ,
\end{equation} 
which after redefining the integration variable as $s\, (1+x\, t+y\, t^{2} )=\sigma $ and using again \eqref{GrindEQ__1_}, yields the result reported in eq. \eqref{GrindEQ__5_}.
  \end{exmp}

 It is evident that the procedure we have outlined can easily be extended to generalized forms of Chebyshev polynomials $U_{n}^{(m)} (x,y)$ as it shown in the following example.
 
 \begin{exmp}
Through the use of lacunary Hermite \cite{DattLoren} and of its generating function
 
\begin{equation} \begin{split}\label{GrindEQ__9_} 
& H_{n}^{(m)} (x,y)=n! \sum _{r=0}^{\lfloor\frac{n}{m} \rfloor}\frac{x^{n-m\, r} y^{r} }{(n-m\, r)!\, r!}  ,\\ 
& \sum _{n=0}^{\infty }\frac{t^{n} }{n!}  H_{n}^{(m)} (x,y)=e^{x\, t+y\, t^{m} } ,\qquad \quad m>2,
 \end{split} \end{equation} 
we can introduce the lacunary Legendre (or Humbert) polynomials \cite{PHumbert,Boas}

\begin{equation}\begin{split} \label{GrindEQ__10_} 
& U_{n}^{(m)} (x,y)=\frac{1}{n!} \int _{0}^{\infty }e^{-s}  H_{n}^{(m)} (-s\; x,-s\;y)ds =
(-1)^{n} \sum _{r=0}^{\lfloor\frac{n}{m} \rfloor}\frac{(-1)^{(m-1)r} (n-(m-1)\; r)!\; x^{n-mr} y^{r} }{(n-m\; r)!\; r!}  , \\[1.2ex] 
& \sum _{n=0}^{\infty }t^{n}  U_{n}^{(m)} (x,y)=\frac{1}{1+x\; t+y\; t^{m} },\qquad \quad \forall m\geqslant 2, \;\; Re(1+x\; t+y\; t^{m} )>0.
 \end{split} \end{equation} 
  \end{exmp}
For this more general case too, the use of the properties of lacunary $HP$ can be usefully exploited to explore those of Lacunary Chebyshev polynomials.

\begin{exmp}
 To this aim we note e.g. that the ``Rainville'' generating function \cite{DattLoren}

\begin{equation} \begin{split}\label{GrindEQ__11_} 
& \sum _{n=0}^{\infty }\frac{t^{n} }{n!}  H_{n+l}^{(m)} (x,y)=H_{l}^{(m,m-1,...,1)} \left(\left\{\frac{p_{m}^{(n)} (x,y;t)}{n!} \right\}_{n=1,...,m} \right)e^{p_{m} (x,y;\, t)} ,\qquad \quad  \forall m,l\in\mathbb{N},\\[1.2ex] 
& H_{n}^{(p,p-1,...,1)} (x_{1} ,...,x_{p} )=n!\sum _{r=0}^{\lfloor\frac{n}{p}\rfloor}\frac{H_{n-p\, r}^{(p-1,p-2,...,1)} (x_{1} ,...,x_{p-1} )\, x_{p}^{r} }{(n-p\, r)!r!}, \\[1.2ex] 
& p_{m} (x,y;t)=x\, t+y\, t^{m} , \qquad \quad p_{m}^{(n)} (x,y; t)=\partial _{t}^{n} p_{m} (x,y;t),\quad n\le m , 
\end{split} \end{equation} 
where $H_{n}^{(p,p-1,...,1)} (x_{1} ,...,x_{p})$ are $p$-variable complete (non lacunary) $HP$ with generating function \cite{Babusci}

\begin{equation} \label{GrindEQ__12_} 
\sum _{n=0}^{\infty }\frac{t^{n} }{n!}  H_{n}^{(p,p-1,...,1)} (x_{1} ,...,x_{p} )=e^{\sum _{s=1}^{p}x_{s} t^{s}  } , 
\end{equation} 
yields, $\forall \mid t \mid <1,\;\; Re(1+x\, t+y\, t^{2})>0$,

\begin{equation}\label{GrindEQ__13_} 
 \sum _{n=0}^{\infty }t^{n}  \frac{(n+l)!}{n!} U_{n+l}^{(m)} (x,y)=l!
\frac{U_{l}^{(m,m-1,...)} \left(\frac{p_{m}^{(1)} (x,y;t)}{1+p_m(x,y;t)} ,\, \frac{1}{2} \frac{p_{m}^{(2)} (x,y;\, t)}{1+p_m(x,y;t)} ,...,\frac{1}{m!} \frac{p_{m}^{(m)} (x,y;t)}{1+p_m(x,y;t)} \right)}{1+p_{m} (x,y;t)}, 
%\quad \forall \mid t \mid <1,\;\; Re(1+x\, t+y\, t^{2})>0  
\end{equation} 
where the complete $p$-variable Chebyshev polynomials are specified by means of the Laplace transform

\begin{equation}\label{key}
U_{n}^{(p,p-1,...,1)} (x_{1} ,...,x_{p} )=\frac{1}{n!} \int _{0}^{\infty }e^{-s}  H_{n}^{(p,p-1,...,1)} (-x_{1} s,...,-x_{p} s)ds ,
\end{equation}                    
straightforwardly yielding the generating function

\begin{equation} \label{GrindEQ__15_} 
\sum _{n=0}^{\infty }t^{n}  U_{n}^{(p,p-1,...,1)} (x_{1} ,...,x_{p} )=\frac{1}{1+\sum _{s=1}^{p}x_{s} t^{s}  }, \qquad \quad Re\left( 1+\sum _{s=1}^{p}x_{s} t^{s} \right)>0.  
\end{equation} 
\end{exmp}

The formalism associated with generalized Chebyshev polynomials is fairly flexible, therefore if we are interested in the successive derivatives of a rational function, we find the Proposition \ref{prop1}.

\begin{prop}\label{prop1}
	$\forall m\in\mathbb{N}$, $\forall x,y\in\mathbb{R}$, $\forall t\in\mathbb{R}^+:1+p_{2} (x,y;t)\neq 0$, a kind of Rodriguez formula for Chebyshev type polynomials holds
\begin{equation}\label{GrindEQ__16_} 
 \left( 1+p_{2} (x,y;\, t)\right) \partial _{t}^{m} \left(\frac{1}{1+p_{2} (x,y;t)} \right) =m!\;U_{m} \left(\frac{p_{2}^{(1)} (x,y;t)}{1+p_{2} (x,y;t)} ,\;\frac{1}{2} \frac{p_{2}^{(2)} (x,y;t)}{\left( 1+p_{2} (x,y;t)\right) } \right)
\end{equation} 	.
\end{prop}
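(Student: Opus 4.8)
The plan is to mimic the reasoning of Example 1: represent $(1+p_2)^{-1}$ as a Laplace integral and then differentiate it $m$ times. Fix $x,y$; since $1+p_2(x,y;0)=1>0$, there is an interval $(0,\varepsilon)$ on which $1+p_2(x,y;t)=1+xt+yt^2>0$, so that, exactly as in the step leading to \eqref{GrindEQ__5_},
\[
\frac{1}{1+p_2(x,y;t)}=\int_0^\infty e^{-s\left(1+p_2(x,y;t)\right)}\,ds ,\qquad t\in(0,\varepsilon).
\]
On a compact subinterval of $(0,\varepsilon)$ the integrand together with all its $t$-derivatives is dominated, for $s$ large, by a fixed multiple of $(1+s^m)e^{-cs}$ with $c>0$; differentiation under the integral sign is therefore licit and yields
\[
\partial_t^{\,m}\frac{1}{1+p_2(x,y;t)}=\int_0^\infty e^{-s}\,\partial_t^{\,m}e^{-s\,p_2(x,y;t)}\,ds .
\]

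The crucial ingredient is the identity $\partial_t^{\,m}e^{xt+yt^2}=H_m(x+2yt,y)\,e^{xt+yt^2}$, which I would obtain directly from the Hermite generating function in \eqref{GrindEQ__2_}: writing $e^{x(t+\tau)+y(t+\tau)^2}=e^{xt+yt^2}\,e^{(x+2yt)\tau+y\tau^2}=e^{xt+yt^2}\sum_{m\ge0}\frac{\tau^m}{m!}H_m(x+2yt,y)$ and comparing with the Taylor expansion $e^{x(t+\tau)+y(t+\tau)^2}=\sum_{m\ge0}\frac{\tau^m}{m!}\partial_t^{\,m}e^{xt+yt^2}$ proves it. Specialising $x\mapsto-sx$, $y\mapsto-sy$ and recalling $p_2^{(1)}(x,y;t)=x+2yt$, $p_2^{(2)}(x,y;t)=2y$, this turns the last integral into
\[
\partial_t^{\,m}\frac{1}{1+p_2(x,y;t)}=\int_0^\infty e^{-s\left(1+p_2(x,y;t)\right)}\,H_m\!\left(-s\,p_2^{(1)}(x,y;t),\,-\tfrac{s}{2}\,p_2^{(2)}(x,y;t)\right)ds .
\]

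Next I would rescale the integration variable by $\sigma=s\left(1+p_2(x,y;t)\right)$, so that $ds=d\sigma/(1+p_2)$ and the two arguments of $H_m$ become $-\sigma\,p_2^{(1)}/(1+p_2)$ and $-\sigma\,p_2^{(2)}/\bigl(2(1+p_2)\bigr)$; by the defining integral \eqref{GrindEQ__1_} (equivalently \eqref{GrindEQ__10_} with $m=2$) the resulting integral equals $m!\,U_m$ evaluated at those two quantities, whence
\[
\partial_t^{\,m}\frac{1}{1+p_2(x,y;t)}=\frac{m!}{1+p_2(x,y;t)}\,U_m\!\left(\frac{p_2^{(1)}(x,y;t)}{1+p_2(x,y;t)},\,\frac12\frac{p_2^{(2)}(x,y;t)}{1+p_2(x,y;t)}\right).
\]
Multiplying by $1+p_2(x,y;t)$ gives \eqref{GrindEQ__16_} for $t\in(0,\varepsilon)$. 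Finally, both members of \eqref{GrindEQ__16_} are rational functions of $t$, each with denominator a power of $1+p_2$, so their agreement on $(0,\varepsilon)$ forces equality at every $t\in\mathbb{R}^+$ with $1+p_2(x,y;t)\neq0$, which is the assertion.

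The only genuinely delicate points I anticipate are the justification of differentiation under the integral sign and the passage, by analytic (here rational) continuation, from the convergence domain $1+p_2>0$ to the full range $1+p_2\neq0$; the algebraic core is just a transcription of the Hermite identities already exploited in Examples 1--3. A shortcut avoiding integrals altogether is also available: expand $(1+p_2)^{-1}=\sum_{n\ge0}t^n\,U_n(x,y)$ from \eqref{GrindEQ__5_}, differentiate $m$ times term by term to obtain $\sum_{k\ge0}\frac{(k+m)!}{k!}t^k\,U_{k+m}(x,y)$, and apply \eqref{GrindEQ__7_} with $l=m$; this reproduces \eqref{GrindEQ__16_} at once, again followed by the same continuation step.
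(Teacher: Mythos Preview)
Your argument is correct and follows essentially the same route as the paper's own proof: write $(1+p_2)^{-1}$ as a Laplace integral, differentiate under the integral using the identity $\partial_t^{\,m}e^{xt+yt^2}=H_m(x+2yt,y)e^{xt+yt^2}$, and then rescale to recognize the integral representation \eqref{GrindEQ__1_} of $U_m$. You supply more analytic detail (domination, rational continuation) than the paper does, and your alternative via \eqref{GrindEQ__7_} is a valid shortcut the paper does not spell out, but the core method is identical.
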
	
\begin{proof}	
	$\forall m\in\mathbb{N}$, $\forall x,y\in\mathbb{R}$, $\forall t\in\mathbb{R}^+:1+p_{2} (x,y;t)\neq 0$, 
\begin{equation}\begin{split} \label{GrindEQ__17_} 
 &\partial _{t}^{m} \left(\frac{1}{1+p_{2} (x,y;t)} \right)=\partial _{t}^{m} \int _{0}^{\infty }e^{-s} e^{-s\, x\, t-s\, y\, t^{2} }  ds=\\[1.2ex]
 & =\int _{0}^{\infty }e^{-s}  H_{m} \left( -sp_{2}^{(1)} (x,y;t)\, ,\, -\frac{s}{2} p_{2}^{(2)} (x,y;t)\right)e^{-s\, x\, t-s\, y\, t^{2} }  \, ds= \\[1.2ex] 
& =m!\frac{U_{m} \left(\frac{p_{2}^{(1)} (x,y;t)}{1+p_{2} (x,y;t)} ,\;\frac{1}{2} \frac{p_{2}^{(2)} (x,y;t)}{\left( 1+p_{2} (x,y;t)\right) } \right)}{1+p_{2} (x,y;\, t)} 
 \end{split} \end{equation} 
%which yields a kind of Rodriguez formula for Chebyshev type polynomials, as reported below
%
%\begin{equation}\begin{split} \label{GrindEQ__17_} 
%& \left( 1+p_{2} (x,y;\, t)\right) \partial _{t}^{m} \left(\frac{1}{1+p_{2} (x,y;t)} \right)=\\
%& =m!\;U_{m} \left(\frac{p_{2}^{(1)} (x,y;t)}{1+p_{2} (x,y;t)} ,\frac{1}{2} \frac{p_{2}^{(2)} (x,y;t)}{1+p_{2} (x,y;t)} \right)
%\end{split}\end{equation} 
\end{proof}

\begin{obs}
Eq. \eqref{GrindEQ__16_}  which should be confronted with an analogous expression valid for the Higher order Hermite polynomials

\begin{equation} \label{GrindEQ__18_} 
\partial _{t}^{m} \left(e^{p_{n} (x,y;t)} \right)=H_{m}^{(n,n-1,...,1)} \left(\left\{\frac{p_{n}^{(s)} (x,y;t)}{s!} \right\}_{s=1,...,p} \right)e^{p_{n} (x,y;t)} . 
\end{equation} 
\end{obs}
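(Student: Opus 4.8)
The plan is to establish \eqref{GrindEQ__18_} by an exponential-shift argument that converts the $m$-th $t$-derivative into a Taylor coefficient in an auxiliary variable, and then to recognize the resulting series as the defining generating function \eqref{GrindEQ__12_} of the complete multivariable Hermite polynomials. First I would introduce a shift variable $\tau$ and write, by Taylor's theorem in $\tau$,
\begin{equation*}
e^{p_{n} (x,y;t+\tau)}=\sum_{m=0}^{\infty}\frac{\tau^{m}}{m!}\,\partial_{t}^{m}\left(e^{p_{n} (x,y;t)}\right),
\end{equation*}
so that the quantity to be computed, $\partial_{t}^{m}\left(e^{p_{n}(x,y;t)}\right)$, is precisely $m!$ times the coefficient of $\tau^{m}$ on the left-hand side.

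The key observation is that $p_{n}(x,y;t)=x\,t+y\,t^{n}$ is a polynomial of degree $n$ in $t$; hence its Taylor expansion about $t$ in powers of $\tau$ terminates after finitely many terms, giving the \emph{exact} identity
\begin{equation*}
p_{n}(x,y;t+\tau)=p_{n}(x,y;t)+\sum_{s=1}^{n}\frac{p_{n}^{(s)}(x,y;t)}{s!}\,\tau^{s},
\end{equation*}
because $p_{n}^{(s)}(x,y;t)=\partial_{t}^{s}p_{n}(x,y;t)=0$ for $s>n$. Exponentiating and factoring out the $\tau$-independent term $e^{p_{n}(x,y;t)}$ then yields
\begin{equation*}
e^{p_{n}(x,y;t+\tau)}=e^{p_{n}(x,y;t)}\,e^{\sum_{s=1}^{n}\frac{p_{n}^{(s)}(x,y;t)}{s!}\,\tau^{s}}.
\end{equation*}

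At this point I would apply the generating function \eqref{GrindEQ__12_} with $p=n$, with the running power variable renamed to $\tau$, and with the Hermite arguments set to $x_{s}=p_{n}^{(s)}(x,y;t)/s!$, which identifies the second exponential factor as
\begin{equation*}
e^{\sum_{s=1}^{n}\frac{p_{n}^{(s)}(x,y;t)}{s!}\,\tau^{s}}=\sum_{m=0}^{\infty}\frac{\tau^{m}}{m!}\,H_{m}^{(n,n-1,...,1)}\left(\left\{\frac{p_{n}^{(s)}(x,y;t)}{s!}\right\}_{s=1,...,n}\right).
\end{equation*}
Substituting this back, multiplying by $e^{p_{n}(x,y;t)}$, and comparing the coefficient of $\tau^{m}/m!$ against the first display gives \eqref{GrindEQ__18_} at once. (I would also remark that the running index in the argument of $H_{m}$ should read $s=1,\dots,n$ rather than $s=1,\dots,p$, so that the number of Hermite arguments matches the degree of $p_{n}$.)

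I expect the only delicate point to be bookkeeping rather than analysis: one must verify that the Taylor expansion of $p_{n}$ in $\tau$ is genuinely finite and exact, which is immediate from $p_{n}^{(s)}=0$ for $s>n$, and that the comparison of $\tau$-coefficients is legitimate. The latter is justified because the inner exponent is a polynomial in $\tau$, so the series in \eqref{GrindEQ__12_} is the actual Maclaurin series of an entire function of $\tau$ and the matching of coefficients holds termwise with no convergence obstruction. Consequently the identity is valid as a polynomial identity in the Hermite arguments for every $m,n\in\mathbb{N}$.
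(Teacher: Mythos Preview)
Your argument is correct. The paper itself does not supply a proof of \eqref{GrindEQ__18_}; it is stated as an \emph{Observation}, an analogy to the Rodriguez-type identity \eqref{GrindEQ__16_}, and is left without justification. Your route --- Taylor-expanding $e^{p_{n}(x,y;t+\tau)}$ in the shift variable $\tau$, using the exact finite expansion of the degree-$n$ polynomial $p_{n}$ about $t$, and then invoking the generating function \eqref{GrindEQ__12_} with $x_{s}=p_{n}^{(s)}(x,y;t)/s!$ --- is exactly the natural derivation one would reverse-engineer from the paper's earlier identity \eqref{GrindEQ__11_}, and it is complete and rigorous. Your remark that the index range should read $s=1,\dots,n$ rather than $s=1,\dots,p$ is also correct; this is a typographical slip in the paper.
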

The formalism we have just discussed shows how the properties of Hermite polynomials and of its generalized forms is a powerful tool to deal with other families of polynomials. In the forthcoming section we will go deeper into the relevant theory by discussing possible extension of the method to algebraic and umbral methods.

\section{ Umbral Methods and Chebyshev Polynomials }

The combination of algebraic and umbral methods has been shown to be extremely effective to deal with the properties (old and new) of special functions and polynomials.

\begin{rem}
To this aim we remind that the use of the umbral operator has allowed the rewriting of the Bessel functions as a Gaussian function, namely \cite{DBabusci,Babusci,SLicciardi}

\begin{equation}\label{J0}
J_0(x)=e^{-\hat{c}\left( \frac{x}{2}\right)^2 }\varphi_0,
\end{equation}
where $\hat{c}$ is an umbral operator, defined in such a way that acting on the vacuum $\varphi_0$ yields\footnote{See \cite{SLicciardi} for a rigorous treatment of the Umbral Methods.}

\begin{equation}\label{cOp}
\hat{c}^\nu \varphi_0=\dfrac{1}{\Gamma(\nu +1)}, \quad \forall \nu\in\mathbb{R}.
\end{equation}
According to the previous identities we recover, from eqs. \eqref{J0}-\eqref{cOp}, the Bessel series \cite{Dattoli}

\begin{equation}\label{key}
J_0(x)=\sum_{r=0}^{\infty}\dfrac{(-1)^r \left( \frac{x}{2}\right)^{2r} }{r!^2}.
\end{equation}
\end{rem}

Eq. \eqref{J0} states that the Gaussian is the umbral image of Bessel functions, however by further stretching the formalism we may also conclude that the Lorentz function can be used as the umbral image of the Bessel provided to set a new umbral operator $\hat{b}$.

\begin{defn}\label{Defopb}
We introduce the umbral operator $\hat{b}$ acting on the vacuum $\psi_0$ \cite{SLicciardi}

\begin{equation}\label{opb}
\hat{b}^\nu\;\psi_0=\dfrac{1}{\left( \Gamma(\nu+1)\right)^2 }.
\end{equation}
\end{defn}

\begin{prop}
	$\forall x\in\mathbb{R}$

\begin{equation}\label{key}
J_0(x)=\dfrac{1}{1+\hat{b}\left( \frac{x}{2}\right)^{2}}\psi_0.
\end{equation}
\end{prop}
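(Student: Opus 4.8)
The plan is to expand the Lorentz (geometric) function as a power series in $\hat b\left(\frac{x}{2}\right)^2$ and then let the umbral operator act term-by-term on the vacuum $\psi_0$. Concretely, for $\left|\hat b\left(\frac{x}{2}\right)^2\right|<1$ in the formal umbral sense, I would write
\begin{equation*}
\dfrac{1}{1+\hat b\left(\frac{x}{2}\right)^{2}}\,\psi_0=\sum_{r=0}^{\infty}(-1)^r\,\hat b^{\,r}\left(\frac{x}{2}\right)^{2r}\psi_0
=\sum_{r=0}^{\infty}(-1)^r\left(\frac{x}{2}\right)^{2r}\dfrac{1}{\left(\Gamma(r+1)\right)^{2}},
\end{equation*}
where the last equality uses Definition \ref{Defopb}, i.e. $\hat b^{\nu}\psi_0=1/(\Gamma(\nu+1))^2$ with $\nu=r$. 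Since $\Gamma(r+1)=r!$, this is exactly $\sum_{r=0}^{\infty}(-1)^r\left(\frac{x}{2}\right)^{2r}/r!^2$, which is the Bessel series for $J_0(x)$ recalled in the Remark above. That establishes the identity.

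The key steps, in order: first, invoke the geometric series expansion of $1/(1+\xi)$ with $\xi=\hat b\left(\frac{x}{2}\right)^2$, treating $\hat b$ as a formal symbol that commutes with the scalar $\left(\frac{x}{2}\right)^2$; second, apply $\hat b^{\,r}$ to $\psi_0$ using \eqref{opb}; third, recognize the resulting series as $J_0(x)$ by comparison with the series displayed in the Remark (equivalently, by comparison with \eqref{J0}--\eqref{cOp} after noting that $\hat b$ produces the square of what $\hat c$ produces, since $\left(\hat c^{\,r}\varphi_0\right)^2=1/r!^2$). One may also present the argument in the reverse direction — start from the Bessel series, rewrite $1/r!^2$ as $\hat b^{\,r}\psi_0$, and resum — which makes the formal manipulation a genuine re-summation rather than an expansion, sidestepping convergence bookkeeping.

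The main obstacle is the usual one for umbral computations: justifying the interchange of the infinite geometric sum with the action of $\hat b$ on the vacuum, and more basically giving meaning to $1/(1+\hat b(x/2)^2)$ as an operator. In the framework the authors cite (\cite{SLicciardi}), this is handled by \emph{defining} such operator functions through their formal series, so the proof is really a one-line consistency check once that convention is in place; I would simply note that the definition of $\hat b$-functions is taken in the series sense of \cite{SLicciardi} and that the series obtained is absolutely convergent for every $x\in\mathbb{R}$, so the formal identity is also an analytic one. No delicate estimate is needed beyond observing that $\sum_r \left(\frac{x}{2}\right)^{2r}/r!^2$ converges for all real $x$.
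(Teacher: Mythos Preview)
Your proof is correct and matches the paper's own argument: both reduce to the geometric-series identity combined with $\hat b^{\,r}\psi_0=1/\Gamma(r+1)^2=1/r!^2$. The only cosmetic difference is that the paper runs the chain in the reverse direction you mention at the end---starting from the Bessel series, inserting $\hat b^{\,r}\psi_0$ for $1/r!^2$, and resumming into $\bigl(1+\hat b(x/2)^2\bigr)^{-1}\psi_0$.
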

\begin{proof}
$\forall x\in\mathbb{R}$
\begin{equation}\label{key}
J_0(x)=\sum_{r=0}^{\infty}\dfrac{(-1)^r \left( \frac{x}{2}\right)^{2r} }{r!^2}=
\sum_{r=0}^{\infty}\dfrac{(-1)^r \left( \frac{x}{2}\right)^{2r} }{\Gamma(r+1)^2}=
\sum_{r=0}^{\infty}(-\hat{b})^r \left( \frac{x}{2}\right)^{2r} \psi_0=\dfrac{1}{1+\hat{b}\left( \frac{x}{2}\right)^{2}}\psi_0.
\end{equation}	
\end{proof}	
%\begin{equation}\begin{split}\label{key}
%& J_0(x)=\dfrac{1}{1+\hat{b}\left( \frac{x}{2}\right)^{2}}\psi_0,\\
%& \hat{b}^\nu\;\psi_0=\dfrac{1}{\left( \Gamma(\nu+1)\right)^2 }.
%\end{split}\end{equation}

\begin{cor}\label{cor1}
It is accordingly evident that $\forall x,\alpha,\beta\in\mathbb{R}$ 

\begin{equation}\label{key}
J_0\left( 2\sqrt{\alpha\;x +\beta\;x^2}\right)=\dfrac{1}{1+\hat{b}\left(\alpha\;x +\beta\;x^2 \right) }\psi_0 .
\end{equation}
\end{cor}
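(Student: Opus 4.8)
The plan is to reduce the claimed identity for $J_0\!\left(2\sqrt{\alpha x+\beta x^2}\right)$ to the Proposition just proved, i.e. to $J_0(x)=\frac{1}{1+\hat b(x/2)^2}\psi_0$. First I would recall the standard series $J_0(z)=\sum_{r=0}^\infty \frac{(-1)^r (z/2)^{2r}}{r!^2}$ and substitute $z=2\sqrt{\alpha x+\beta x^2}$, so that $(z/2)^{2r}=(\alpha x+\beta x^2)^r$ and hence $J_0\!\left(2\sqrt{\alpha x+\beta x^2}\right)=\sum_{r=0}^\infty \frac{(-1)^r (\alpha x+\beta x^2)^r}{r!^2}$. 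Note that the branch of the square root is immaterial here because only even powers occur, so the expression is an entire function of $\alpha x+\beta x^2$ and the formal manipulation is legitimate for all real (indeed complex) values.

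Next I would replace $\frac{1}{r!^2}=\frac{1}{\Gamma(r+1)^2}$ by $\hat b^{\,r}\psi_0$ using the defining action \eqref{opb} of the umbral operator, obtaining $J_0\!\left(2\sqrt{\alpha x+\beta x^2}\right)=\sum_{r=0}^\infty (-\hat b)^r(\alpha x+\beta x^2)^r\,\psi_0$. Then I would recognise the right-hand side as the geometric series in the umbral variable, $\sum_{r=0}^\infty\bigl(-\hat b(\alpha x+\beta x^2)\bigr)^r\psi_0=\frac{1}{1+\hat b(\alpha x+\beta x^2)}\psi_0$, which is precisely the asserted formula. Essentially this is the same three-line computation as in the proof of the preceding Proposition, with $(x/2)^2$ uniformly replaced by the polynomial $\alpha x+\beta x^2$; alternatively one can phrase it as an immediate corollary by setting $\frac{x}{2}\mapsto\sqrt{\alpha x+\beta x^2}$ in that Proposition, since the operator identity depends on its argument only through the combination $(\,\cdot\,)^2$.

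The only point requiring a word of care — and the closest thing to an obstacle — is the interpretation of $\frac{1}{1+\hat b(\alpha x+\beta x^2)}\psi_0$: it must be understood, as throughout the paper, as shorthand for the geometric series $\sum_{r\ge0}(-\hat b)^r(\alpha x+\beta x^2)^r\psi_0$ acting term-by-term on the vacuum, not as the action of a genuine inverse operator. With that convention fixed (already established in the proof of the previous Proposition via the rigorous treatment of \cite{SLicciardi}), the chain of equalities is purely formal and valid for all $x,\alpha,\beta\in\mathbb{R}$, so no convergence or domain restrictions beyond those inherited from the entirety of $J_0$ are needed.

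\begin{proof}
$\forall x,\alpha,\beta\in\mathbb{R}$, starting from the Bessel series and substituting $x\mapsto 2\sqrt{\alpha x+\beta x^2}$ (only even powers appear, so the choice of branch is irrelevant),
\begin{equation}\label{key}
J_0\!\left(2\sqrt{\alpha x+\beta x^2}\right)=\sum_{r=0}^{\infty}\frac{(-1)^r\left(\alpha x+\beta x^2\right)^r}{r!^2}
=\sum_{r=0}^{\infty}\frac{(-1)^r\left(\alpha x+\beta x^2\right)^r}{\Gamma(r+1)^2}
=\sum_{r=0}^{\infty}(-\hat b)^r\left(\alpha x+\beta x^2\right)^r\psi_0
=\frac{1}{1+\hat b\left(\alpha x+\beta x^2\right)}\psi_0,
\end{equation}
where the second-to-last step uses \eqref{opb} and the last step is the (umbral) geometric series expansion.
\end{proof}
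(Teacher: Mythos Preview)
Your argument is correct and is exactly the approach the paper intends: the corollary is stated without proof as ``accordingly evident,'' meaning it follows immediately from the preceding Proposition by replacing $\left(\frac{x}{2}\right)^2$ with $\alpha x+\beta x^2$, which is precisely the substitution you carry out explicitly. Your remark that only even powers of the square root occur, so the branch is irrelevant and the identity holds for all real $x,\alpha,\beta$, is a welcome clarification that the paper leaves implicit.
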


\begin{cor}\label{cor2}
By using the polynomial expansion in eq. \eqref{GrindEQ__5_}, we end up with

\begin{equation}\label{key}
J_0\left( 2\sqrt{\alpha\;x +\beta\;x^2}\right)=\sum_{n=0}^{\infty}x^n\;U_n\left(\alpha\; \hat{b},\beta\;\hat{b} \right)\psi_0. 
\end{equation}
\end{cor}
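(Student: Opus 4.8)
The plan is to read the statement off from Corollary~\ref{cor1} by expanding the Lorentz denominator appearing there through the generating function \eqref{GrindEQ__5_} of the second kind Chebyshev polynomials, treating the umbral symbol $\hat b$ as an ordinary real parameter until the very last step, where it is made to act on the vacuum $\psi_0$.

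Concretely, I would start from
\begin{equation}
J_0\left( 2\sqrt{\alpha\,x +\beta\,x^2}\right)=\frac{1}{1+\hat b\left(\alpha\,x +\beta\,x^2\right)}\,\psi_0
=\frac{1}{1+(\alpha\hat b)\,x+(\beta\hat b)\,x^{2}}\,\psi_0 ,
\end{equation}
recognising in the denominator the polynomial $1+Xt+Yt^{2}$ of \eqref{GrindEQ__5_} under the identification $t\mapsto x$, $X\mapsto\alpha\hat b$, $Y\mapsto\beta\hat b$. Eq.~\eqref{GrindEQ__5_} then gives $\bigl(1+(\alpha\hat b)x+(\beta\hat b)x^{2}\bigr)^{-1}=\sum_{n\ge0}x^{n}\,U_{n}(\alpha\hat b,\beta\hat b)$, and applying both sides to $\psi_0$ yields the asserted identity at once. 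To justify inserting the umbral symbol into \eqref{GrindEQ__5_} I would fall back on the explicit series: expand $\bigl(1+\hat b(\alpha x+\beta x^{2})\bigr)^{-1}=\sum_{k\ge0}(-1)^{k}\hat b^{\,k}(\alpha x+\beta x^{2})^{k}$, let $\hat b$ act through \eqref{opb}, i.e. $\hat b^{\,k}\psi_0=1/k!^{2}$, expand $(\alpha x+\beta x^{2})^{k}$ by the binomial theorem, and regroup by powers of $x$. The coefficient of $x^{n}$ produced in this way is precisely the explicit polynomial form of $U_{n}$ displayed after \eqref{GrindEQ__1_}, namely $(-1)^{n}\sum_{r}(n-r)!\,\alpha^{n-2r}(-\beta)^{r}/((n-2r)!\,r!)$, with each monomial decorated by the residual umbral factor $\hat b^{\,n-r}$ acting on $\psi_0$; that is exactly $U_{n}(\alpha\hat b,\beta\hat b)\psi_0$.

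The main obstacle is the legitimacy of this rearrangement: one is simultaneously interchanging the geometric summation, the umbral action of $\hat b$ on $\psi_0$, and the regrouping by powers of $x$, so it must be checked within the framework of \cite{SLicciardi} that the associated double series converges absolutely — which it does for $|x|$ small enough — so that all these operations commute; the identity then extends to all admissible $x$ since $J_0\bigl(2\sqrt{\,\cdot\,}\bigr)$ is an entire function of its argument. Once this convergence bookkeeping is granted, the identification of the $n$-th coefficient with $U_{n}(\alpha\hat b,\beta\hat b)\psi_0$ is immediate, and the corollary follows; equivalently, the whole argument collapses to a one-line consequence of Corollary~\ref{cor1} and \eqref{GrindEQ__5_} read as an identity of formal power series in $x$.
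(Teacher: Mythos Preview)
Your proposal is correct and follows exactly the route the paper has in mind: the corollary is stated there as an immediate consequence of Corollary~\ref{cor1} and the generating function \eqref{GrindEQ__5_}, with no further argument given. Your added discussion of absolute convergence and the legitimacy of the umbral rearrangement goes beyond what the paper provides, but the underlying idea is identical.
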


\begin{prop}
	$\forall x,\alpha,\beta\in\mathbb{R}$
	
\begin{equation}\begin{split}\label{key}
& J_0\left( 2\sqrt{\alpha\;x +\beta\;x^2}\right)=\sum_{n=0}^{\infty}x^n\;{}_2U_n\left(\alpha,\beta\right), \\
& {}_2U_n\left(\alpha,\beta\right):= (-1)^n\sum_{r=0}^{\lfloor\frac{n}{2}\rfloor}\dfrac{(-1)^r  \alpha^{n-2r}\beta^r}{(n-2r)!(n-r)!r!}.
\end{split}\end{equation}	
\end{prop}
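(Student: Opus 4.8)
The plan is to take Corollary~\ref{cor2} as the starting point and let the umbral operator $\hat b$ act explicitly on each two–variable Chebyshev polynomial. Corollary~\ref{cor2} already gives $J_0\!\left(2\sqrt{\alpha x+\beta x^2}\right)=\sum_{n=0}^{\infty}x^n\,U_n(\alpha\hat b,\beta\hat b)\,\psi_0$, so it suffices to evaluate $U_n(\alpha\hat b,\beta\hat b)\psi_0$ and identify it with the claimed ${}_2U_n(\alpha,\beta)$. First I would substitute $x\mapsto\alpha\hat b$, $y\mapsto\beta\hat b$ into the closed polynomial form of $U_n(x,y)$ derived above (the explicit expression of $U_n$ obtained in the first Example), namely $U_n(x,y)=(-1)^n\sum_{r=0}^{\lfloor n/2\rfloor}\frac{(n-r)!\,x^{n-2r}(-y)^r}{(n-2r)!\,r!}$. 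This turns the $n$-th term into $(-1)^n\sum_{r=0}^{\lfloor n/2\rfloor}\frac{(n-r)!\,(-1)^r\alpha^{n-2r}\beta^r}{(n-2r)!\,r!}\,\hat b^{\,n-2r}\hat b^{\,r}\psi_0$.

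Next I would use the additive law for umbral exponents, $\hat b^{\,n-2r}\hat b^{\,r}=\hat b^{\,n-r}$, followed by the defining action~\eqref{opb}, $\hat b^{\,n-r}\psi_0=\big(\Gamma(n-r+1)\big)^{-2}=\big((n-r)!\big)^{-2}$. One factor $(n-r)!$ then cancels against the $(n-r)!$ in the numerator, and the $n$-th coefficient collapses to $(-1)^n\sum_{r=0}^{\lfloor n/2\rfloor}\frac{(-1)^r\alpha^{n-2r}\beta^r}{(n-2r)!\,(n-r)!\,r!}$, which is exactly ${}_2U_n(\alpha,\beta)$. Re-inserting this into the sum over $n$ gives the stated expansion, and the series representation of ${}_2U_n$ is read off directly from the computation.

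The only genuinely non-routine point, and hence the main obstacle, is the justification that $\hat b$ may be carried inside the finite sum defining $U_n(\alpha\hat b,\beta\hat b)$ and that powers combine as $\hat b^{a}\hat b^{b}=\hat b^{a+b}$ \emph{before} acting on the vacuum $\psi_0$; this is precisely what the rigorous umbral formalism of~\cite{SLicciardi} licenses, and it is harmless here because each $U_n(\alpha\hat b,\beta\hat b)\psi_0$ is a \emph{finite} linear combination of monomials in $\hat b$. No interchange-of-limits issue arises for the outer series, since $\sum_{n}x^n\,{}_2U_n(\alpha,\beta)$ is, after the rearrangement, nothing but the absolutely convergent Bessel series in $2\sqrt{\alpha x+\beta x^2}$. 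As a purely analytic alternative that sidesteps the operator altogether, one may expand $J_0\!\left(2\sqrt{\alpha x+\beta x^2}\right)=\sum_{k\geq0}\frac{(-1)^k(\alpha x+\beta x^2)^k}{(k!)^2}$, write $(\alpha x+\beta x^2)^k=x^k(\alpha+\beta x)^k$, apply the binomial theorem, set $n=k+r$ with $r$ the binomial index, and collect the coefficient of $x^n$: using $\binom{n-r}{r}=\frac{(n-r)!}{r!\,(n-2r)!}$ and $(-1)^{n-r}=(-1)^n(-1)^r$ one recovers $(-1)^n\sum_{r=0}^{\lfloor n/2\rfloor}\frac{(-1)^r\alpha^{n-2r}\beta^r}{(n-2r)!\,(n-r)!\,r!}$, confirming the same ${}_2U_n(\alpha,\beta)$.
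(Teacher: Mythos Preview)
Your proposal is correct and follows essentially the same route as the paper: start from Corollary~\ref{cor2}, insert the explicit form of $U_n(x,y)$ with $x\mapsto\alpha\hat b$, $y\mapsto\beta\hat b$, combine $\hat b^{\,n-2r}\hat b^{\,r}=\hat b^{\,n-r}$, and apply Definition~\ref{Defopb} to produce the factor $1/((n-r)!)^2$. The paper's proof is literally this one-line computation; your added remarks on why the umbral manipulations are legitimate and the alternative direct expansion of the Bessel series are extra content not present in the paper but do not change the approach.
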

%Furthermore, by noting that 
\begin{proof}
	$\forall x,\alpha,\beta\in\mathbb{R}$, by using Definition \ref{Defopb} and Corollaries \ref{cor1} and \ref{cor2}, we find
\begin{equation}\label{key}
U_n\left(\alpha\; \hat{b},\beta\;\hat{b} \right)\psi_0=(-1)^n\sum_{r=0}^{\lfloor\frac{n}{2}\rfloor}\dfrac{(-1)^r (n-r)! \alpha^{n-2r}\beta^r}{(n-2r)!r!}\hat{b}^{n-r}\psi_0 =
(-1)^n\sum_{r=0}^{\lfloor\frac{n}{2}\rfloor}\dfrac{(-1)^r  \alpha^{n-2r}\beta^r}{(n-2r)!(n-r)!r!}.
\end{equation}
\end{proof}

\begin{obs}
It is also useful to note that

\begin{equation}\label{key}
\partial_{x}^m \;J_0\left( 2\sqrt{\alpha\;x +\beta\;x^2}\right)=\sum_{n=0}^{\infty}\dfrac{(n+m)!}{n!}x^n\;{}_2U_{n+m}\left(\alpha,\beta\right).
\end{equation}
\end{obs}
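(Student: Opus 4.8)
The plan is to start from the power-series identity proved in the preceding Proposition,
\begin{equation}\label{startseries}
J_0\!\left(2\sqrt{\alpha\,x+\beta\,x^2}\right)=\sum_{n=0}^{\infty}x^{n}\,{}_2U_{n}(\alpha,\beta),
\end{equation}
and to obtain the claimed formula simply by applying $\partial_x^{m}$ to both sides and differentiating the right-hand side term by term. The key preliminary remark is that $J_0(2\sqrt{z})=\sum_{r\ge 0}(-1)^r z^{r}/r!^{2}$ is an entire function of $z$; composing with the polynomial $z=\alpha x+\beta x^{2}$ therefore makes the left-hand side of \eqref{startseries} an entire function of $x$, so the series on the right has infinite radius of convergence and all its derived series converge uniformly on compact subsets of $\mathbb{R}$, legitimising the term-by-term differentiation.

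Next I would carry out the differentiation itself. Using $\partial_x^{m}x^{n}=\dfrac{n!}{(n-m)!}\,x^{\,n-m}$ for $n\ge m$ and $\partial_x^{m}x^{n}=0$ for $0\le n<m$, the first $m$ terms of the series drop out, and after the index shift $n\mapsto n+m$ the factor $\dfrac{n!}{(n-m)!}$ becomes $\dfrac{(n+m)!}{n!}$ while the coefficient ${}_2U_{n}$ becomes ${}_2U_{n+m}$. This yields exactly
\begin{equation}\label{endseries}
\partial_x^{m}\,J_0\!\left(2\sqrt{\alpha\,x+\beta\,x^2}\right)=\sum_{n=0}^{\infty}\frac{(n+m)!}{n!}\,x^{n}\,{}_2U_{n+m}(\alpha,\beta),
\end{equation}
which is the assertion.

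An alternative, operationally flavoured derivation, which I would record as a remark, uses the umbral image of Corollary \ref{cor1}, $J_0\!\left(2\sqrt{\alpha x+\beta x^2}\right)=\dfrac{1}{1+\hat b\,(\alpha x+\beta x^2)}\,\psi_0$. Noting that $\hat b(\alpha x+\beta x^{2})=p_2(\alpha\hat b,\beta\hat b;x)$ in the notation of the Introduction, one applies the Rodriguez-type formula of Proposition \ref{prop1} with the substitutions $x\rightsquigarrow\alpha\hat b$, $y\rightsquigarrow\beta\hat b$, $t\rightsquigarrow x$, then expands the resulting $U_m$ by \eqref{GrindEQ__5_} and lets $\hat b$ act on $\psi_0$ through Definition \ref{Defopb}; collecting powers of $x$ reproduces the coefficients $\dfrac{(n+m)!}{n!}\,{}_2U_{n+m}(\alpha,\beta)$ and hence \eqref{endseries} again.

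The only point that genuinely needs care, and which I expect to be the main (though mild) obstacle, is the justification of interchanging $\partial_x^{m}$ with the infinite summation in \eqref{startseries}; this is settled once and for all by the entirety remark above. The remaining work — the factorial identity for $\partial_x^{m}x^{n}$ and the shift of summation index — is purely routine bookkeeping.
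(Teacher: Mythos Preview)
Your proposal is correct; the paper states this identity as an Observation without proof, and your term-by-term differentiation of the preceding series expansion (with the entirety remark to justify the interchange and the index shift $n\mapsto n+m$) is exactly the intended, immediate derivation. The alternative umbral route you sketch is a nice bonus but unnecessary here.
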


We have shown that the method we have proposed allows  a great deal of flexibility, in the sense that, hardly achievable results with ordinary means, are easily derived within the present computational framework. In the forthcoming section we will discuss further applications of this technique by exploring other properties of the polynomials belonging to the Legendre family.

\section{Legendre and Legendre-like Polynomials}

 It is almost natural to note that the polynomials defined through the integral representation \cite{L.C.Andrews} 
 
 \begin{equation}\begin{split}\label{Pn}
 P_{n} (x,y)&=\dfrac{(-1)^{n}}{\sqrt{\pi}} \sum _{r=0}^{\lfloor\frac{n}{2}\rfloor }\frac{\Gamma \left(n-r+\frac{1}{2} \right)x^{n-2r} (-y)^{r} }{(n-2\, r)!\, r!}  = \\
 & =\frac{1}{n!\Gamma \left(\frac{1}{2} \right)} \int _{0}^{\infty }e^{-s}  s^{-\frac{1}{2} } H_{n} (-s\, x,-sy)ds, \qquad \quad \forall x,y\in\mathbb{R},\forall n\in\mathbb{N}
\end{split} \end{equation}
are generated by

\begin{equation}\label{key}
\sum _{n=0}^{\infty }t^{n}  P_{n} (x,y)=\frac{1}{\sqrt{1+x\, t+y\, t^{2} } } 
\end{equation}
 and, upon replacing $x\to -2\, x,\, y=1$, are recognized as Legendre polynomials.

\begin{obs}
 All the results of the previous section can be naturally transposed to this family of polynomials thus finding e.g.

\begin{equation} \label{GrindEQ__13c_} 
\partial _{t}^{m} \dfrac{1}{\sqrt{1+p_{2} (x,y;t)} } =m!\dfrac{P_{m} \left(\dfrac{p_{2}^{(1)} (x,y;t)}{1+p_{2} (x,y;t)} ,\dfrac{1}{2!} \dfrac{p_{2}^{(2)} (x,y;t)}{1+p_{2} (x,y;t)} \right)}{\sqrt{1+p_{2} (x,y;t)} } , \quad \forall m\in\mathbb{N}   
\end{equation} 
or by its extension to the lacunary forms, namely 

\begin{equation}\label{key}
\partial _{t}^{n} \frac{1}{\sqrt{1+p_{m} (x,y;t)} } =n!\dfrac{P_{n} \left(\left\{\dfrac{1}{s!} \dfrac{p_{m}^{(s)} (x,y;t)}{1+p_{m} (x,y;t)} \right\}_{s=1,...,n} \right)}{\sqrt{1+p_{m} (x,y;t)} } .
\end{equation}    
\end{obs}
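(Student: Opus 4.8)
The plan is to mimic, verbatim in structure, the argument used for Proposition \ref{prop1} and for eq. \eqref{GrindEQ__13c_}, simply replacing the quadratic polynomial $p_2(x,y;t)$ by the general lacunary polynomial $p_m(x,y;t)=x\,t+y\,t^m$ and the exponential weight alone by the weight $e^{-s}s^{-1/2}$ that appears in the integral representation \eqref{Pn}. First I would write $\dfrac{1}{\sqrt{1+p_m(x,y;t)}}$ as a Laplace-type integral: starting from \eqref{Pn} summed against $t^n$ one gets $\sum_n t^n P_n(x,y)=\dfrac{1}{\Gamma(1/2)}\int_0^\infty e^{-s}s^{-1/2}e^{-s\,x\,t-s\,y\,t^2}\,ds$, and the identical manipulation with lacunary Hermite \eqref{GrindEQ__9_} in place of ordinary Hermite gives $\dfrac{1}{\sqrt{1+p_m(x,y;t)}}=\dfrac{1}{\Gamma(1/2)}\int_0^\infty e^{-s}s^{-1/2}e^{-s\,p_m(x,y;t)}\,ds$, valid where $\mathrm{Re}(1+p_m(x,y;t))>0$.

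Next I would differentiate $n$ times in $t$ under the integral sign. The only $t$-dependence inside is $e^{-s\,p_m(x,y;t)}$, and here is the one computational point worth stating: by the Rainville-type identity \eqref{GrindEQ__18_} (or equivalently by iterating the chain rule and recognizing the multivariable Hermite series \eqref{GrindEQ__12_}), $\partial_t^n e^{-s\,p_m(x,y;t)}=H_n^{(n,n-1,\dots,1)}\!\left(\left\{-\dfrac{s}{k!}\,p_m^{(k)}(x,y;t)\right\}_{k=1,\dots,n}\right)e^{-s\,p_m(x,y;t)}$, since $\partial_t^k\big(-s\,p_m(x,y;t)\big)=-s\,p_m^{(k)}(x,y;t)$. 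Substituting this into the differentiated integral, I would then invoke the Laplace-transform definition of the complete multivariable Legendre-type polynomials, i.e. the natural analogue of \eqref{Pn} for $p$ variables, $P_n^{(p,\dots,1)}(x_1,\dots,x_p)=\dfrac{1}{n!\,\Gamma(1/2)}\int_0^\infty e^{-s}s^{-1/2}H_n^{(p,\dots,1)}(-x_1 s,\dots,-x_p s)\,ds$, to collapse the integral. Finally, a rescaling of the integration variable $\sigma=s\,(1+p_m(x,y;t))$, exactly as in the passage after eq. \eqref{GrindEQ__8_}, produces the factor $(1+p_m(x,y;t))^{-1/2}$ out front, the prefactor $n!$ from the normalization, and rescales each argument $p_m^{(k)}$ by $1/(1+p_m(x,y;t))$, yielding the claimed formula.

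The main obstacle is not any single algebraic identity but the bookkeeping of the multivariable Hermite/Legendre arguments: one must check that differentiating $p_m$ more than $m$ times kills the higher entries (since $p_m^{(k)}(x,y;t)=0$ for $k>m$, so effectively only $\min(n,m)$ arguments are nonzero and the notation $\{\cdot\}_{s=1,\dots,n}$ is consistent), and that the index shift and the rescaling interact correctly to give precisely $\left\{\tfrac{1}{s!}\tfrac{p_m^{(s)}}{1+p_m}\right\}_{s=1,\dots,n}$ rather than some off-by-one variant. I would also note the mild analytic caveat — differentiation under the integral sign is justified by the exponential decay $e^{-s}$ dominating any polynomial-in-$s$ growth coming from the Hermite factor, uniformly for $t$ in a neighbourhood where $\mathrm{Re}(1+p_m(x,y;t))>0$ — but this is routine and I would dispatch it in a sentence.
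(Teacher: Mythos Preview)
Your proposal is correct and follows exactly the route the paper intends: the Observation carries no separate proof in the paper beyond the remark that ``all the results of the previous section can be naturally transposed,'' and your write-up is precisely that transposition --- the Laplace representation with weight $e^{-s}s^{-1/2}$, differentiation under the integral via the Rainville identity \eqref{GrindEQ__18_}, and the change of variable $\sigma=s\,(1+p_m(x,y;t))$ as after eq.~\eqref{GrindEQ__8_}. Your bookkeeping remarks on the vanishing of $p_m^{(k)}$ for $k>m$ and on the legitimacy of differentiating under the integral are apt and would only strengthen what the paper leaves implicit.
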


It is evident that the procedure we have envisaged is a fairly powerful tool to deal with analytical computation involving the calculation of successive derivatives of roots of polynomials.\\

 This is only one aspect of the question, we can further stretch the formalism and use the following identity \cite{Babusci}
 
\begin{equation} \label{GrindEQ__15b_} 
\lambda ^{x\partial _{x} } f(x)=f(\lambda x) 
\end{equation} 
to deduce, from \eqref{Pn}, the following operational definition of Legendre polynomials. 

\begin{prop}
	$\forall x,y\in\mathbb{R}$, $\forall n\in\mathbb{N}$,
	
\begin{equation} \begin{split}\label{GrindEQ__16b_} 
P_{n} (x,y)&=\frac{1}{n!\Gamma \left(\frac{1}{2} \right)} \int _{0}^{\infty }e^{-s}  s^{-\frac{1}{2} } (-s)^{(x\, \partial _{x} +y\, \partial _{y} )} ds\;H_{n} (\, x,y)=\\
& =\frac{(-1)^{(x\, \partial _{x} +y\, \partial _{y} )}}{\sqrt{\pi }n! }  \Gamma \left(\frac{1}{2} +(x\partial _{x} +y\partial _{y} )\right)H_{n} (x,y).
\end{split}\end{equation} 
\end{prop}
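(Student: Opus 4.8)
The plan is to read the closed form of $P_{n}(x,y)$ off its Laplace-type integral representation \eqref{Pn} and then to trade the rescaling inside the integrand for a dilation operator via the two-variable analogue of \eqref{GrindEQ__15b_}, namely $\lambda^{x\partial_{x}+y\partial_{y}}f(x,y)=f(\lambda x,\lambda y)$ (valid since $x\partial_{x}$ and $y\partial_{y}$ commute). Writing $\hat{E}:=x\partial_{x}+y\partial_{y}$ for the Euler operator, the monomials $x^{n-2r}y^{r}$ appearing in the Hermite expansion \eqref{GrindEQ__2_} are eigenfunctions of $\hat{E}$ with eigenvalue $n-r$, so that $(-s)^{\hat{E}}$ applied to $H_{n}(x,y)$ reproduces exactly the substitution $x\to-sx,\ y\to-sy$, i.e.\ $H_{n}(-sx,-sy)=(-s)^{\hat{E}}H_{n}(x,y)$. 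Since $\hat{E}$ is a differential operator in the variables $x,y$ only, it does not see the integration variable, and carrying $(-s)^{\hat{E}}$ through the integral sign in \eqref{Pn} produces the first equality.

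For the second equality I would split $(-s)^{\hat{E}}=(-1)^{\hat{E}}\,s^{\hat{E}}$ --- legitimate because on each monomial both sides equal $(-s)^{n-r}=(-1)^{n-r}s^{n-r}$ --- and pull the $s$-independent factor $(-1)^{\hat{E}}$ outside the integral. What is left, $\int_{0}^{\infty}e^{-s}s^{-1/2}\,s^{\hat{E}}H_{n}(x,y)\,ds$, is evaluated termwise: on $x^{n-2r}y^{r}$ it gives $\left(\int_{0}^{\infty}e^{-s}s^{(n-r)-1/2}\,ds\right)x^{n-2r}y^{r}=\Gamma(n-r+\tfrac12)\,x^{n-2r}y^{r}$ by the definition \eqref{GrindEQ__3_} of the Euler gamma function, the integral converging because $n-r\ge0$. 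By linearity the whole integral equals $\Gamma(\tfrac12+\hat{E})H_{n}(x,y)$, and using $\Gamma(\tfrac12)=\sqrt{\pi}$ gives the stated operational form.

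As a final check I would let $(-1)^{\hat{E}}$ and $\Gamma(\tfrac12+\hat{E})$ act termwise on the expansion \eqref{GrindEQ__2_} of $H_{n}$: this returns $\frac{1}{\sqrt{\pi}}\sum_{r}\frac{(-1)^{n-r}\Gamma(n-r+\tfrac12)\,x^{n-2r}y^{r}}{(n-2r)!\,r!}$, which matches the series in \eqref{Pn} once one rewrites $(-y)^{r}=(-1)^{r}y^{r}$ and $(-1)^{n-r}=(-1)^{n}(-1)^{r}$. The only point requiring care is the justification of interchanging the operator with the $s$-integration and the interpretation of $\Gamma(\tfrac12+\hat{E})$ as an operator; but since $H_{n}$ is a polynomial, everything takes place in the finite-dimensional span of the monomials $\{x^{n-2r}y^{r}\}$, on which $\hat{E}$ acts diagonally, so these are purely algebraic identities and no genuine analytic obstacle (unbounded operators, convergence of operator series) actually arises.
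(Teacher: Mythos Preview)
Your proposal is correct and follows precisely the route the paper indicates: the paper does not give a separate proof but simply cites the dilation identity \eqref{GrindEQ__15b_} and the integral representation \eqref{Pn} as the ingredients, and your argument is exactly the natural unpacking of that hint (extend \eqref{GrindEQ__15b_} to two variables, recognise $H_{n}(-sx,-sy)=(-s)^{\hat E}H_{n}(x,y)$, then evaluate the resulting $s$-integral termwise via \eqref{GrindEQ__3_}). Your remark that everything lives in the finite-dimensional span of the monomials $x^{n-2r}y^{r}$, so that the operator manipulations are purely algebraic, is a helpful clarification that the paper leaves implicit.
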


\begin{cor}
 Furthermore

\begin{equation} \label{GrindEQ__18b_} 
P_{n} (x,y)=\frac{(-1)^{(x\, \partial _{x} +y\, \partial _{y} )}}{\sqrt{\pi }n! }  \Gamma \left(\frac{1}{2} +(x\partial _{x} +y\partial _{y} )\right)\, e^{y\, \partial _{x}^{2} } x^{n}  .
\end{equation} 
\end{cor}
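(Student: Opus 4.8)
The plan is to combine the operational definition of $P_n(x,y)$ from the previous Proposition (eq. \eqref{GrindEQ__16b_}) with the known operational representation of the two-variable Hermite polynomials as a heat-type exponential acting on a monomial. First I would recall the standard identity $H_n(x,y)=e^{y\,\partial_x^2}x^n$, which follows immediately from the series in \eqref{GrindEQ__2_} by expanding the exponential $e^{y\,\partial_x^2}=\sum_{r\ge 0}\frac{y^r}{r!}\,\partial_x^{2r}$ and applying it term by term to $x^n$, since $\partial_x^{2r}x^n=\frac{n!}{(n-2r)!}x^{n-2r}$ for $2r\le n$ and vanishes otherwise. This reproduces exactly the finite sum $n!\sum_{r=0}^{\lfloor n/2\rfloor}\frac{x^{n-2r}y^r}{(n-2r)!\,r!}$.

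Next I would substitute this expression for $H_n(x,y)$ into the right-hand side of \eqref{GrindEQ__16b_}, giving
\begin{equation}\label{key}
P_n(x,y)=\frac{(-1)^{(x\partial_x+y\partial_y)}}{\sqrt{\pi}\,n!}\,\Gamma\!\left(\tfrac12+(x\partial_x+y\partial_y)\right)e^{y\,\partial_x^2}x^n .
\end{equation}
The corollary then follows verbatim once \eqref{GrindEQ__16b_} is granted, so strictly speaking the only content is the identity $H_n(x,y)=e^{y\,\partial_x^2}x^n$ together with the observation that the operator $e^{y\,\partial_x^2}x^n$ is itself a polynomial in $x$ and $y$, so that the Euler operator $x\partial_x+y\partial_y$ acting on it is well defined and measures total degree. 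I would make this explicit: on a monomial $x^{a}y^{b}$ one has $(x\partial_x+y\partial_y)x^ay^b=(a+b)x^ay^b$, hence $(-1)^{x\partial_x+y\partial_y}$ and $\Gamma(\tfrac12+x\partial_x+y\partial_y)$ act as the scalars $(-1)^{a+b}$ and $\Gamma(\tfrac12+a+b)$ on that term. Since each term $\frac{n!}{(n-2r)!\,r!}x^{n-2r}y^r$ of $H_n(x,y)$ has total degree $(n-2r)+r=n-r$, the operator prefactor multiplies it by $\frac{(-1)^{n-r}}{\sqrt{\pi}\,n!}\Gamma(n-r+\tfrac12)$, and summing over $r$ recovers precisely the series for $P_n(x,y)$ displayed in \eqref{Pn}.

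The main (and only genuine) obstacle is a bookkeeping one: one must check that the homogeneity degree seen by the Euler operator matches the argument $n-r$ appearing in $\Gamma(n-r+\tfrac12)$ in \eqref{Pn}, i.e. that it is the \emph{total} degree $n-r$ in $(x,y)$ and not, say, the degree $n-2r$ in $x$ alone, that is selected. This is exactly why the operator is written as $(-1)^{x\partial_x+y\partial_y}$ rather than $(-1)^{x\partial_x}$: the weight $y^r$ contributes an extra $r$ to the degree, converting $n-2r$ into $n-r$ and reproducing the Legendre coefficients. Once this is observed the proof is a direct term-by-term comparison with no analytic subtleties, the integral having already been evaluated in the preceding Proposition via the Gamma-function identity \eqref{GrindEQ__3_} and the dilation formula \eqref{GrindEQ__15b_}.
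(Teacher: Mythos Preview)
Your proposal is correct and follows exactly the same route as the paper: the paper's proof consists of the single observation that $e^{y\,\partial_x^{2}}x^{n}=H_{n}(x,y)$ (the heat-equation operational identity, eq.~\eqref{GrindEQ__17b_}) and then substitutes this into the preceding Proposition~\eqref{GrindEQ__16b_}. Your additional paragraph verifying that the Euler operator picks out total degree $n-r$ and thus reproduces the coefficients $\Gamma(n-r+\tfrac12)$ in \eqref{Pn} is a welcome sanity check that the paper omits, but it is not a different argument.
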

\begin{proof}
By recalling that the two variable $HP$ are a particular solutions of the heat equation and can be expressed as \cite{Babusci}	
	
\begin{equation} \label{GrindEQ__17b_} 
e^{y\, \partial _{x}^{2} } x^{n} =H_{n} (x,y) ,
\end{equation} 
we eventually end up with the result.
%\begin{equation} \label{GrindEQ__18b_} 
%P_{n} (x,y)=\frac{(-1)^{(x\, \partial _{x} +y\, \partial _{y} )}}{\sqrt{\pi }n! }  \Gamma \left(\frac{1}{2} +(x\partial _{x} +y\partial _{y} )\right)\, e^{y\, \partial _{x}^{2} } x^{n}  ,
%\end{equation} 
\end{proof}
Eq. \eqref{GrindEQ__18b_}  can be further exploited to develop a different point of view to the theory of Legendre like polynomials.

%\begin{rem}
%	\tcr{forse si puo' spostare dopo l'eq 53}\\
%Before closing the paper, let us note that one of the series representations of ordinary Legendre polynomials $(x\rightarrow-2x,\; y\rightarrow1)$ is \cite{EWW}
%\begin{equation}\begin{split}\label{key}
%& P_n(-2x,1):=P_n(x),\\[1.2ex]
%& P_n(x)=\dfrac{1}{2^n}\sum_{r=0}^{\lfloor\frac{n}{2}\rfloor}\dfrac{(-1)^r (2(n-r))! x^{n-2r}}{(n-2r)!(n-r)!r!}.
%\end{split}\end{equation}
%According to the definition of ${}_2U_n(\alpha,\beta)$ polynomials, we can easily infer that $P_n(x)$ can also be written as 
%          
%\begin{equation}\label{key}
%  P_n(x)=\left( -\dfrac{1}{2}\right)^n\int_{0}^\infty e^{-s}s^n\; {}_2U_n\left(xs,1 \right)ds  .        
%\end{equation}
%\end{rem}

\section{On the Scaling Properties of the Legendre Polynomials}

In this section we embed integral transform methods and operational techniques, devloped so far, to derive and generalize some results concerning the Legendre polynomials $P_n(x)$ regarding the possibility of expressing $P_n(\lambda x)$, with $\lambda$ positive constant, as a sum of Legendre polynomials. The technique we propose is shown to be profitably extended to other families of special polynomials and we discuss the possibility of further research in this direction.\\

The Legendre polynomials are of a central importance in different branches of pure and applied Mathematics \cite{Arfken}. They play a pivotal role in the theory of approximation, integration and novel views to their properties often appear in current mathematical literature \cite{Olver,Bosch,Anli,Antonov,Szmytkowski,Bos}. Regarding some of the most recent investigations, new elements of discussion have been provided in two recent publications. In ref. \cite{DGMR} the integral representation method has been adopted to take advantage of the intimate connection between Legendre and Hermite polynomials, to provide an alternative point of view to the relevant theory. In ref. \cite{Laurent} new identities regarding the derivatives of Legendre polynomials and the associated multiplication theorems have been revisited from a different perspective, thus offering further insight to their properties. The strategy we follow is a byproduct of that introduced in the previous section and indeed we express the Legendre polynomials in terms of an integral representation involving Hermite polynomials.

\begin{rem}
To this aim we remind that the two-variable Legendre polynomials ($LP$) can be expressed through the series

\begin{equation}\label{key}
P_n(x,y)=\dfrac{(-1)^n}{\sqrt{\pi}}\sum_{r=0}^{\lfloor\frac{n}{2}\rfloor}\dfrac{\Gamma\left( n-r+\frac{1}{2}\right)x^{n-2r}(-y)^r }{(n-2r)!r!}, \quad \forall x,y\in\mathbb{R}, \forall n\in\mathbb{N}
\end{equation}
or by means of the integral representation \cite{L.C.Andrews,SLicciardi} 

\begin{equation}\label{eqPH}
P_n(x,y)=\dfrac{1}{\sqrt{\pi}n!}\int_0^\infty e^{-s}s^{-\frac{1}{2}}H_n(-s x,-s y)ds.
\end{equation}
%where
%
%\begin{equation}
%H_n(x,y)=n!\sum_{r=0}^{\lfloor\frac{n}{2}\rfloor}\dfrac{x^{n-2r}y^r}{(n-2r)!r!}, \quad \forall x,y\in\mathbb{R}, \forall n\in\mathbb{N}
%\end{equation}
%are the two-variable Hermite polynomials ($HP$).\\

\noindent The use of two-variable $HP$ generating function \eqref{GrindEQ__2_}
%
%\begin{equation}\label{key}
%\sum_{n=0}^\infty \dfrac{t^n}{n!}H_n(x,y)=e^{xt+yt^2}
%\end{equation}
straightforwardly yields that associated with the two-variable $LP$, namely 

\begin{equation}\label{key}
\sum_{n=0}^\infty t^nP_n(x,y)=\dfrac{1}{\sqrt{1+xt+yt^2}}, 	\quad \left\lbrace  x,y,t\in \mathbb{R}: \left| \mid xt\mid+\mid y\mid t^2 \right| <1,  \right\rbrace .
\end{equation}
It is accordingly evident that the ordinary Legendre are identified as a particular case of two-variable $LP$  \cite{EWW}, namely 

\begin{equation}\label{key}
P_n(-2x,1)=P_n(x).
\end{equation}
%\begin{rem}
%	Before closing the paper, let us note that one of the series representations of ordinary Legendre polynomials
where $(x\rightarrow-2x,\; y\rightarrow1)$ and
	\begin{equation}
		 P_n(x)=\dfrac{1}{2^n}\sum_{r=0}^{\lfloor\frac{n}{2}\rfloor}\dfrac{(-1)^r (2(n-r))! x^{n-2r}}{(n-2r)!(n-r)!r!}.
\end{equation}
	According to the definition of ${}_2U_n(\alpha,\beta)$ polynomials, we can easily infer that $P_n(x)$ can also be written as 
	
	\begin{equation}\label{key}
		P_n(x)=\left( -\dfrac{1}{2}\right)^n\int_{0}^\infty e^{-s}s^n\; {}_2U_n\left(xs,1 \right)ds  .        
	\end{equation}
\end{rem}
The properties of $H_n(x,y)$, summarized below, are extremely useful to establish those of the Legendre counterpart (for further comments see ref. \cite{Babusci})\footnote{Referring to the notation of the previous sections, we note that $H_n(x,y)=H_n^{(2)}(x,y)$ and it should be noted that analogous identities holds for their $H_n^{(m)}(x,y)$ counterparts.}.

\begin{propert}
\begin{enumerate}
	\item Variable dilatation
	\begin{equation}\label{VD}
	a^n H_n(x,y)=H_n(ax,a^2y);
	\end{equation}
	\item 	Repeated derivatives
	\begin{equation}\label{RD}
	\partial_x^r H_n(x,y)=\dfrac{n!}{(n-r)!}H_{n-r}(x,y);
	\end{equation}
	\item 	Multiplication Theorem
	\begin{equation}\label{MT}
	H_n(\lambda x,y)=\sum_{r=0}^n \left((\lambda-1)x \right)^r \binom{n}{r} H_{n-r}(x,y);
	\end{equation}
	\item 	Operational Definition
	\begin{equation}\label{OD}
	H_n(x,y)=e^{y\;\partial_x^2}x^n.
	\end{equation}
\end{enumerate}      
\end{propert}

\begin{cor}                                     
According to eq. \eqref{VD} we write

\begin{equation}\label{key}
P_n(\lambda x)=\dfrac{1}{\sqrt{\pi}n!}\int_0^\infty e^{-s}s^{n-\frac{1}{2}}H_n(2\lambda x,-s^{-1})\;ds, \qquad \forall \lambda,x\in\mathbb{R}, \forall n\in\mathbb{N}.
\end{equation}
The use of  eq. \eqref{MT} yields\footnote{Or, in a slightly different form,  \begin{equation*}\label{key}
	P_n(\lambda x)=\dfrac{1}{\sqrt{\pi}n!}\sum_{r=0}^n  \binom{n}{r}\left((2\lambda-1)x \right)^r \int_0^\infty e^{-s}s^{n-\frac{1}{2}}H_{n-r}( x,-s^{-1})\; ds.
	\end{equation*}} 

\begin{equation}\label{key}
P_n(\lambda x)=\dfrac{1}{\sqrt{\pi}n!}\sum_{r=0}^n  \binom{n}{r}\left((\lambda-1)2x \right)^r \int_0^\infty e^{-s}s^{n-\frac{1}{2}}H_{n-r}(2 x,-s^{-1})\;ds.
\end{equation}
According to the successive derivative property (eq. \eqref{RD}) we end up with\footnote{Or \begin{equation*}\label{key}
	P_n(\lambda x)=\dfrac{1}{\sqrt{\pi}n!}\sum_{r=0}^n \dfrac{ \left((2\lambda-1)x \right)^r}{r!}  \int_0^\infty e^{-s}s^{n-\frac{1}{2}}\partial_x^r H_n\left( x,-\dfrac{1}{s}\right)\; ds .
	\end{equation*}} 

\begin{equation}\label{key}
P_n(\lambda x)=\dfrac{2^n}{\sqrt{\pi}n!}\sum_{r=0}^n \dfrac{ \left((\lambda-1)x \right)^r}{r!}  \int_0^\infty e^{-s}s^{n-\frac{1}{2}}\partial_x^r H_n\left( x,-\dfrac{1}{4\;s}\right) ds
\end{equation}
which, after a trivial rearrangement, allows the final result

\begin{equation}\label{key}
P_n(\lambda x)=\sum_{r=0}^n \dfrac{(\lambda -1)^r}{r!}x^r\partial_x^r P_n(x).
\end{equation}
\end{cor}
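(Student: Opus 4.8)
The plan is to prove the identity by pushing everything through the Hermite integral representation \eqref{eqPH} of the two‑variable Legendre polynomials, using only the three structural properties of $H_n(x,y)$ collected above: variable dilatation \eqref{VD}, repeated derivatives \eqref{RD}, and the Hermite multiplication theorem \eqref{MT}. I would start from $P_n(x)=P_n(-2x,1)$ and \eqref{eqPH}, replace $x\to\lambda x$, and apply \eqref{VD} with dilatation parameter $s$ so as to extract $s^n$ from the Hermite polynomial, obtaining $P_n(\lambda x)=\frac{1}{\sqrt{\pi}\,n!}\int_0^\infty e^{-s}s^{n-1/2}H_n(2\lambda x,-1/s)\,ds$. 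This isolates the $\lambda$–dependence in the first Hermite argument.

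Next I would write $H_n(2\lambda x,-1/s)=H_n\!\big(\lambda\cdot(2x),-1/s\big)$ and expand by the multiplication theorem \eqref{MT}, which produces $\sum_{r=0}^n\binom{n}{r}\big((\lambda-1)2x\big)^r H_{n-r}(2x,-1/s)$ inside the integral, moving all the $\lambda$–dependence into monomials and binomial coefficients. Then a second use of \eqref{VD} (now with parameter $2$) rewrites $H_{n-r}(2x,-1/s)=2^{\,n-r}H_{n-r}(x,-1/(4s))$, and \eqref{RD} turns $H_{n-r}$ into $\partial_x^r H_n$; after collecting the powers of $2$, the $\binom{n}{r}$, and the factor $(n-r)!/n!$ from \eqref{RD}, the $r$‑th term of the integrand becomes $\frac{(\lambda-1)^r x^r}{r!}\,\partial_x^r H_n(x,-1/(4s))$ up to a global constant $2^n/(\sqrt{\pi}\,n!)$. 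Finally, applying \eqref{VD} and \eqref{eqPH} once more in reverse identifies $\frac{2^n}{\sqrt{\pi}\,n!}\int_0^\infty e^{-s}s^{n-1/2}\partial_x^r H_n(x,-1/(4s))\,ds=\partial_x^r P_n(x)$ (differentiation under the integral being legitimate since, for fixed $x$, the integrand is dominated by $e^{-s}$ times a polynomial in $s$), and summing over $r$ gives $P_n(\lambda x)=\sum_{r=0}^n\frac{(\lambda-1)^r}{r!}x^r\partial_x^r P_n(x)$.

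I expect the only real obstacle to be bookkeeping: \eqref{VD} is invoked three times with two different dilatation parameters, and the powers of $2$, the coefficient $\binom{n}{r}$, and the ratio $(n-r)!/n!$ must conspire to collapse exactly to $2^n/r!$; keeping the surviving Hermite argument uniformly equal to $-1/(4s)$ so that it lines up with the integral representation of $\partial_x^r P_n(x)$ is the delicate step. As an independent sanity check — and in fact a one‑line alternative proof — one may note that $\lambda\mapsto P_n(\lambda x)$ is a polynomial of degree $n$ in $\lambda$, so its Taylor expansion about $\lambda=1$ terminates and is exact; since $\frac{d^r}{d\lambda^r}P_n(\lambda x)=x^r P_n^{(r)}(\lambda x)$, this is precisely the claimed identity. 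Equivalently, it is the operational statement \eqref{GrindEQ__15b_}, $\lambda^{x\partial_x}P_n(x)=P_n(\lambda x)$, combined with the elementary fact that on polynomials $\binom{x\partial_x}{r}=\tfrac{1}{r!}x^r\partial_x^r$, so that $\lambda^{x\partial_x}=\big(1+(\lambda-1)\big)^{x\partial_x}=\sum_{r\ge 0}\tfrac{(\lambda-1)^r}{r!}x^r\partial_x^r$.
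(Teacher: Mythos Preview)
Your main argument is correct and follows the paper's proof step for step: start from the integral representation \eqref{eqPH} with $P_n(x)=P_n(-2x,1)$, apply dilatation \eqref{VD} to pull out $s^n$, expand $H_n(2\lambda x,-s^{-1})$ by the multiplication theorem \eqref{MT}, then use \eqref{VD} again (parameter $2$) together with \eqref{RD} to collapse the coefficients to $2^n/r!$ and recognise the remaining integral as $\partial_x^r P_n(x)$. Your closing ``sanity check'' via the terminating Taylor expansion of $\lambda\mapsto P_n(\lambda x)$ about $\lambda=1$ (equivalently the operational identity $\lambda^{x\partial_x}=\sum_r\frac{(\lambda-1)^r}{r!}x^r\partial_x^r$ on polynomials) is a genuinely shorter alternative that the paper does not give; it bypasses the Hermite machinery entirely, at the cost of not illustrating the integral-transform method that is the paper's point.
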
 

\begin{cor}
According to eq. \eqref{OD} it is evident that the following operational identity holds true

\begin{equation}
\begin{split}
& P_n(x,y)=\hat{\Delta}\left( \dfrac{(-x)^n}{n!}\right) ,\\
& \hat{\Delta}=\dfrac{1}{\sqrt{\pi}}\int_0^\infty e^{-s}s^{n-\frac{1}{2}}e^{-\frac{y}{s}\partial_x^2}ds.
\end{split} 
\end{equation}  
\end{cor}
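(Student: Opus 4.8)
The plan is to establish the operational identity $P_n(x,y)=\hat{\Delta}\left(\frac{(-x)^n}{n!}\right)$ by starting from the integral representation \eqref{eqPH} and substituting the operational definition \eqref{OD} of the Hermite polynomials. First I would write $H_n(-sx,-sy)=e^{-sy\,\partial_u^2}u^n\big|_{u=-sx}$, i.e. the heat-operator applied to $u^n$ evaluated at $u=-sx$. The key observation is that $e^{-sy\,\partial_u^2}u^n\big|_{u=-sx}$ can be re-expressed, after rescaling the differentiation variable, in terms of $\partial_x$ acting on $(-sx)^n=(-s)^n x^n$: indeed, by the variable dilatation property \eqref{VD} (or equivalently by the chain rule $\partial_u = (-s)^{-1}\partial_x$ when $u=-sx$), one has $H_n(-sx,-sy)=(-s)^n e^{-(y/s)\partial_x^2}\frac{x^n}{1}$ up to the combinatorial normalization carried by $n!$. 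The cleanest route is to use \eqref{VD} directly: $H_n(-sx,-sy)=(-s)^n H_n\!\left(x,\tfrac{-sy}{(-s)^2}\right)=(-s)^n H_n\!\left(x,-\tfrac{y}{s}\right)=(-s)^n e^{-\frac{y}{s}\partial_x^2}x^n$, using \eqref{OD} in the last step.

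Next I would insert this into \eqref{eqPH}:
\begin{equation*}
P_n(x,y)=\frac{1}{\sqrt{\pi}\,n!}\int_0^\infty e^{-s}s^{-\frac12}(-s)^n e^{-\frac{y}{s}\partial_x^2}x^n\,ds
=\frac{(-1)^n}{\sqrt{\pi}}\int_0^\infty e^{-s}s^{\,n-\frac12}e^{-\frac{y}{s}\partial_x^2}\frac{x^n}{n!}\,ds.
\end{equation*}
Pulling the factor $(-1)^n/n!$ inside and writing $(-1)^n x^n = (-x)^n$, the right-hand side becomes exactly $\hat\Delta\!\left(\frac{(-x)^n}{n!}\right)$ with $\hat\Delta$ as defined in the statement. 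This completes the identification.

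The main obstacle — and the point that must be handled with care rather than waved through — is the justification for interchanging the operator $e^{-\frac{y}{s}\partial_x^2}$ with the integral over $s$, and more delicately the fact that the operator depends on the integration variable $s$ through the coefficient $-y/s$. One must be clear that $\hat\Delta$ is defined as the operator $\frac{1}{\sqrt\pi}\int_0^\infty e^{-s}s^{n-\frac12}e^{-\frac{y}{s}\partial_x^2}\,ds$ acting term-by-term on the monomial $\frac{(-x)^n}{n!}$, which is legitimate because $e^{-\frac{y}{s}\partial_x^2}x^n$ is a finite sum (a polynomial of degree $n$ in $x$ with coefficients that are rational in $s$ times $s^{-k}$), so the $s$-integral reduces to a finite linear combination of Gamma integrals $\int_0^\infty e^{-s}s^{n-\frac12-k}\,ds=\Gamma(n+\tfrac12-k)$, each convergent for $k\le n$. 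Thus the interchange is over a finite sum and poses no analytic difficulty, but it should be stated that $\hat\Delta$ is to be understood in this term-by-term sense; once this is made explicit, the computation above is immediate and the corollary follows.
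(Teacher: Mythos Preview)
Your proposal is correct and follows exactly the route the paper has in mind: the paper states the corollary without proof, merely invoking the operational definition \eqref{OD}, and your argument---combining the integral representation \eqref{eqPH} with the dilatation \eqref{VD} to write $H_n(-sx,-sy)=(-s)^n H_n(x,-y/s)=(-s)^n e^{-\frac{y}{s}\partial_x^2}x^n$ and then substituting---is precisely the implicit computation behind ``it is evident.'' Your additional remark that the interchange of the $s$-integral with $e^{-\frac{y}{s}\partial_x^2}$ is harmless because it acts on a monomial as a finite sum is a useful clarification the paper omits.
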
 

\begin{prop}
	$\forall \lambda,x,y\in\mathbb{R}$, $\forall n\in\mathbb{N}$
	
\begin{equation}\label{Pnl}
P_n(\lambda x,y)=\sum_{r=0}^n \dfrac{y^r}{r!}\left(1-\lambda^2 \right)^r \lambda^{n-2r}\partial_x^r P_{n-r}(x,y).
\end{equation} 		
\end{prop}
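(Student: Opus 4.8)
The plan is to start from the integral representation \eqref{eqPH} applied to the two-variable Legendre polynomials, namely $P_n(\lambda x,y)=\frac{1}{\sqrt{\pi}n!}\int_0^\infty e^{-s}s^{-1/2}H_n(-s\lambda x,-sy)\,ds$, and to transfer the dilatation $x\mapsto\lambda x$ onto the Hermite argument using the scaling property \eqref{VD}. Writing $H_n(-s\lambda x,-sy)=H_n\bigl(\lambda(-sx),\lambda^2(-sy/\lambda^2)\bigr)=\lambda^n H_n(-sx,-sy/\lambda^2)$ would isolate the awkward factor $y/\lambda^2$ in the second slot; alternatively, and more in the spirit of the Corollary just proved, one can combine \eqref{VD} with the Multiplication Theorem \eqref{MT} to expand $H_n$ in the first argument. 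First I would therefore fix which of the two slots to perturb: since the target \eqref{Pnl} has $\lambda^{n-2r}$ and $(1-\lambda^2)^r$, the natural move is to write $-s\lambda x$ as $\lambda\cdot(-sx)$ and $-sy$ as $\lambda^2\cdot(-sy/\lambda^2)$, then re-expand the difference $-sy/\lambda^2-(-sy)=-sy(1-\lambda^2)/\lambda^2$ through the Hermite addition formula in the $y$-variable, $H_n(x,y+z)=\sum_r\binom{n}{r}\,r!\,\ldots$ — the appropriate identity being that $H_n(x,y)$ is an Appell-type sequence in $y$, so that shifting $y$ produces exactly a sum over $\partial_x^{2r}$-type terms with the binomial weights that will collapse to $\frac{1}{r!}$ after the factorials cancel.

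Concretely, the key steps in order: (i) apply \eqref{VD} to pull out $\lambda^n$ and rewrite $P_n(\lambda x,y)=\frac{\lambda^n}{\sqrt{\pi}n!}\int_0^\infty e^{-s}s^{-1/2}H_n(-sx,-sy/\lambda^2)\,ds$; (ii) use the Appell/heat-semigroup structure \eqref{OD}, $H_n(x,y')=e^{y'\partial_x^2}x^n$, to write $H_n(-sx,-sy/\lambda^2)=e^{-(sy/\lambda^2)\partial_{x}^2}(-sx)^n$ and compare with $H_n(-sx,-sy)=e^{-sy\partial_x^2}(-sx)^n$, so that $H_n(-sx,-sy/\lambda^2)=e^{sy(1-1/\lambda^2)\partial_x^2}H_n(-sx,-sy)$; (iii) expand the exponential as $\sum_{r\ge0}\frac{1}{r!}\bigl(sy(1-1/\lambda^2)\bigr)^r\partial_x^{2r}$ and use the repeated-derivative rule \eqref{RD} in the form $\partial_x^{2r}H_n(x,y)=\frac{n!}{(n-2r)!}H_{n-2r}(x,y)$, which here gives a factor $s^r$ from the chain rule in the first argument together with $\frac{n!}{(n-2r)!}H_{n-2r}(-sx,-sy)$; (iv) push the resulting $s$-powers into the integral, so that the $s^{r}\cdot s^{r}=s^{2r}$ accumulated (one batch from the prefactor $(sy\cdots)^r$, one from differentiating $(-sx)^{n}$... — this bookkeeping must be done carefully) recombines with $e^{-s}s^{-1/2}$ and the Hermite to reproduce, via \eqref{eqPH} again, a multiple of $P_{n-2r}(x,y)$; (v) collect the constants: $\lambda^n$ times $\bigl(y(1-1/\lambda^2)\bigr)^r=y^r\lambda^{-2r}(\lambda^2-1)^r(-1)^r$ times the combinatorial factor, which should reorganize into $\lambda^{n-2r}y^r(1-\lambda^2)^r/r!$ times $\partial_x^rP_{n-r}(x,y)$ — note the index in \eqref{Pnl} is $n-r$, not $n-2r$, so step (iii) must instead differentiate only $r$ times in a variable that itself carries the $H$-degree shift, i.e. one should use $\partial_x$ acting through the integral representation at the level of $P$, not at the level of $H$; this means after step (ii)–(iii) I would re-express $e^{sy(1-1/\lambda^2)\partial_x^2}$ acting on the whole integral as $\bigl(1-\lambda^2\bigr)^r$-weighted powers of the operator $y\,\partial_x^2$ evaluated against $s^{-1/2}$, recognizing each application of $y\partial_x^2$ on $P_{n-j}$ as lowering the index by one more unit per derivative-in-$x$ after accounting for the $s$-integration Gamma shift.

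The main obstacle I expect is precisely this index bookkeeping in step (iv)–(v): reconciling the $n-2r$ that naturally appears from applying $\partial_x^{2r}$ to Hermite polynomials with the $n-r$ and the single $\partial_x^r P_{n-r}$ that appears in the statement. The resolution should be that each factor of $y\partial_x^2$ applied to $P_n(x,y)$, when unfolded through \eqref{eqPH}, simultaneously lowers the Hermite degree by $2$ and, because of the extra $s$ it drags through the $\Gamma$-integral (shifting $s^{-1/2}\to s^{1/2}$, i.e. $\Gamma(\tfrac12)\to\Gamma(\tfrac32)$ weight ratios), effectively behaves like $x\partial_x$ on the Legendre side — this is exactly the content of identities \eqref{GrindEQ__13c_}–\eqref{GrindEQ__18b_}, where $\partial_t$ on $(1+p_2)^{-1/2}$ produces $P_m$ with shifted arguments. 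So the clean route is: establish the single-variable operator identity $y\,\partial_x^2\,P_n(x,y)=$ (a combination lowering the index by one) directly from \eqref{eqPH} and \eqref{RD}, then prove \eqref{Pnl} by writing $P_n(\lambda x,y)=\lambda^n e^{-(1-\lambda^{-2})\,y\,\partial_x^2}P_n(x,y)\big|_{\text{shifted}}$ and expanding, with each term identified by induction. Everything else — the Gaussian integrals, the binomial sums, the $\Gamma$-function manipulations — is routine and follows the pattern already displayed in eqs. \eqref{GrindEQ__17_} and in the Corollaries preceding the statement.
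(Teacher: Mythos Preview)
Your strategy is the paper's: factor out the ``extra'' heat propagator via the operational identity \eqref{OD}, expand it as a power series in $\partial_x^2$, and identify each term through the integral representation as a lower-index Legendre polynomial. Where you diverge from the paper, and where your bookkeeping gets tangled, is in the choice of integral form. Before doing anything, the paper applies the scaling \eqref{VD} inside \eqref{eqPH} to pass to the equivalent representation
\[
P_n(x,y)=\frac{1}{\sqrt{\pi}\,n!}\int_0^\infty e^{-s}s^{\,n-\frac12}\,H_n\!\left(-x,-\tfrac{y}{s}\right)ds
=\frac{1}{\sqrt{\pi}}\int_0^\infty e^{-s}s^{\,n-\frac12}\,e^{-\frac{y}{s}\partial_x^2}\,\frac{(-x)^n}{n!}\,ds,
\]
so that all $\partial_x$-derivatives act on $H_n(-x,-y/s)$ and never pull down stray factors of $s$. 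This is precisely what you are missing in your step~(ii), where writing $e^{-(sy/\lambda^2)\partial_x^2}(-sx)^n$ conflates the variables and creates the $s$-accounting mess you anticipate.

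The index puzzle you correctly flag --- why the answer has $\partial_x^rP_{n-r}$ rather than something with $n-2r$ --- is then resolved in a single algebraic step, not by induction or by the identities \eqref{GrindEQ__13c_}--\eqref{GrindEQ__18b_}. After expanding the propagator one has terms with $\partial_x^{2r}H_n(-x,-y/s)$; split $\partial_x^{2r}=\partial_x^r\circ\partial_x^r$ and let the inner block act via \eqref{RD} with the chain rule,
\[
\frac{1}{n!}\,\partial_x^{2r}H_n(-x,\cdot)
=\frac{(-1)^r}{(n-r)!}\,\partial_x^{r}H_{n-r}(-x,\cdot).
\]
The sign $(-1)^r$ absorbed here is exactly what flips $(\lambda^2-1)^r$ into the $(1-\lambda^2)^r$ of the statement. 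The surviving outer $\partial_x^r$ commutes through the $s$-integral, and the integrand is now $e^{-s}s^{(n-r)-1/2}H_{n-r}(-x,-y/s)/(n-r)!$, which after undoing the scaling \eqref{VD} is precisely $\sqrt{\pi}\,P_{n-r}(x,y)$. Collecting $\lambda^n\cdot\lambda^{-2r}$ gives \eqref{Pnl}. So your outline is right in spirit; the gap is only that you have not isolated this ``halve the derivatives'' identity, and your proposed workaround (interpreting $y\partial_x^2$ as $x\partial_x$ on the Legendre side, then inducting) is both incorrect and unnecessary.
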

\begin{proof}
	The use of the same procedure of previuous corollaries yields

\begin{equation}\label{key}
P_n(\lambda x,y)=\dfrac{1}{\sqrt{\pi}}\int_0^\infty e^{-s}s^{n-\frac{1}{2}}e^{\frac{y}{s}\left( \frac{\lambda^2 -1}{\lambda^2}\right) \partial_x^2}e^{-\frac{y}{s}\partial_x^2}ds\left(\dfrac{(-\lambda x)^n}{n!} \right) 
\end{equation}
which, after expanding the exponential inside the integral, allows the following conclusion

\begin{equation}\label{key}
\begin{split}
P_n(\lambda x,y)&=
\dfrac{\lambda^n}{\sqrt{\pi}}\sum_{r=0}^n \dfrac{y^r}{r!}\left( \frac{\lambda^2 -1}{\lambda^2}\right)^r \int_0^\infty e^{-s}s^{n-r-\frac{1}{2}}\partial_x^{2r}\left( e^{-\frac{y}{s}\partial_x^2}\left(\dfrac{(- x)^n}{n!} \right) \right) ds=\\
& =\dfrac{\lambda^n}{\sqrt{\pi}n!}\sum_{r=0}^n \dfrac{y^r}{r!}\left( \frac{\lambda^2 -1}{\lambda^2}\right)^r \int_0^\infty e^{-s}s^{n-r-\frac{1}{2}}\partial_x^{2r} H_n\left( -x,-\dfrac{y}{s}\right) ds=\\
& = \dfrac{\lambda^n}{\sqrt{\pi}}\sum_{r=0}^n \dfrac{y^r}{r!}\left( \frac{1-\lambda^2 }{\lambda^2}\right)^r \dfrac{1}{(n-r)!}\int_0^\infty e^{-s}s^{n-r-\frac{1}{2}}\partial_x^{r} H_{n-r}\left( -x,-\dfrac{y}{s}\right) ds
%& \textcolor{red}{opp \; cosi'}\\
%&\textcolor{red}{=\dfrac{(-\lambda)^n}{\sqrt{\pi}}\sum_{r=0}^n \dfrac{(-y)^r}{r!}\left( \frac{\lambda^2+1 }{\lambda^2}\right)^r \dfrac{1}{(n-r)!}\int_0^\infty e^{-s}s^{n-r-\frac{1}{2}}\partial_x^{r} H_{n-r}\left( x,\dfrac{y}{s}\right) ds}
\end{split} 
\end{equation}   
which finally yields Eq. \eqref{Pnl}.
%\begin{equation}\label{key}
%P_n(\lambda x,y)=\sum_{r=0}^n \dfrac{y^r}{r!}\left(1-\lambda^2 \right)^r \lambda^{n-2r}\partial_x^r P_{n-r}(x,y).
%\end{equation}   
\end{proof}             
The same identity has been previously derived by the use of more conventional means in ref. \cite{Laurent}.\\

The results we have obtained are a consequence of the integral representation yielding a link between Legendre and Hermite polynomials. We will complete this paragraph by considering further relations of the scaling type corroborating the importance of this representation.\\

In ref. \cite{SLicciardi} the following umbral representation of Hermite polynomials has been shown to be particularly useful

\begin{equation} \label{HermLagbisScaling}
H_{n} (x,y)  = \left( x+{}_y {\hat{h}}\right) ^n \theta_0, \quad \forall x,y\in\mathbb{R},\forall n\in\mathbb{N}.
\end{equation}
Within such a context, the umbral operator ${}_y\hat{h}$  acts on the vacuum $\theta_0$ according to the rule\footnote{The notion of vacuum for umbral operators can be found in \cite{SLicciardi} and a convenient representation of the operator and of the Hermite function vacuum is \begin{equation*}\label{key}
	\theta(z) :=\theta_z=y^{\frac{z}{2}}
	\left( \dfrac{\Gamma(z+1)}{\Gamma\left( \frac{1}{2}z+1\right) }\left|  \cos \left( \frac{\pi}{2}z\right)  \right| \right) , \quad \forall z\in\mathbb{R}.
	\end{equation*}} 	

\begin{equation}
\begin{split}\label{eq2HermLagbis} 
& {}_y \hat{h}^r\;\theta_0 :=\theta_r, \quad \forall r\in \mathbb{R},\\
& \theta_r =\dfrac{y^{\frac{r}{2}}r!}{\Gamma\left( \frac{r}{2}+1\right) }\left|  \cos \left( r\dfrac{\pi}{2}\right) \right| = 
\left\lbrace   \begin{array}{ll}
0                           & r=2s+1 \\
y^s \dfrac{(2s)!}{s!} & r=2s
\end{array}\right. \;\;\forall s\in\mathbb{Z}.
\end{split}\end{equation}  
It is accordingly evident that

\begin{equation}\label{yhgenfunScaling}
e^{{}_{y}\hat{h}t}\theta_{0}=\sum_{r=0}^{\infty}\dfrac{t^r}{r!} {}_{y}\hat{h}^{\;r}\theta_{0}=e^{yt^2},\quad \forall y,t\in\mathbb{R}.
\end{equation}

\begin{prop}
	$\forall x,y\in\mathbb{R}$
	
\begin{equation}\label{key}
\lim_{n\rightarrow\infty} H_n\left(x, \dfrac{y}{n^2} \right)\simeq x^n e^{\frac{y}{x^2}}.
\end{equation}	
\end{prop}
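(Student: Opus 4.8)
The plan is to start from the explicit series \eqref{GrindEQ__2_} for the two-variable Hermite polynomials, substitute $y\to y/n^2$ and isolate the leading power $x^n$ (the statement being read for $x\neq 0$, since the limit involves division by $x^2$). First I would write
\[
H_n\!\left(x,\frac{y}{n^2}\right)=n!\sum_{r=0}^{\lfloor n/2\rfloor}\frac{x^{n-2r}(y/n^2)^r}{(n-2r)!\,r!}=x^n\sum_{r=0}^{\lfloor n/2\rfloor}c_{n,r}\,\frac{(y/x^2)^r}{r!},\qquad c_{n,r}:=\frac{n!}{(n-2r)!\,n^{2r}},
\]
so the whole question is reduced to the behaviour of the inner sum as $n\to\infty$.

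Next I would record the elementary facts about the combinatorial factor: writing $c_{n,r}=\prod_{j=0}^{2r-1}\bigl(1-\tfrac{j}{n}\bigr)$ shows at once that $0\le c_{n,r}\le 1$ for every admissible pair $(n,r)$ (note $n\ge 2r$ on the range of summation), and that $c_{n,r}\to 1$ as $n\to\infty$ for each fixed $r$. Then, regarding the finite sum as a series over all $r\ge 0$ with the terms $r>\lfloor n/2\rfloor$ set equal to zero, the general term is dominated in modulus by $|y/x^2|^r/r!$, which is summable and independent of $n$; Tannery's theorem (dominated convergence for series) therefore licenses the interchange of limit and summation, giving
\[
\lim_{n\to\infty}\sum_{r=0}^{\lfloor n/2\rfloor}c_{n,r}\,\frac{(y/x^2)^r}{r!}=\sum_{r=0}^{\infty}\frac{(y/x^2)^r}{r!}=e^{y/x^2},
\]
that is, $H_n(x,y/n^2)/x^n\to e^{y/x^2}$, which is exactly the asserted asymptotic $H_n(x,y/n^2)\simeq x^n e^{y/x^2}$.

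The only genuinely delicate point is the justification of this term-by-term passage to the limit; everything else is a one-line computation. A fully equivalent route avoids the explicit series by using the operational form \eqref{OD}, $H_n(x,y)=e^{y\partial_x^2}x^n$, whence $H_n(x,y/n^2)=e^{(y/n^2)\partial_x^2}x^n=\sum_r\frac{(y/n^2)^r}{r!}\frac{n!}{(n-2r)!}x^{n-2r}$ and one estimates the same coefficients $c_{n,r}$; alternatively one may argue umbrally from \eqref{HermLagbisScaling}--\eqref{eq2HermLagbis}, expanding $(x+{}_{y/n^2}\hat{h})^n\theta_0$ by the binomial theorem, noting that the odd powers of ${}_{y/n^2}\hat{h}$ annihilate the vacuum, and observing that $\binom{n}{2s}\tfrac{(2s)!}{s!}x^{n-2s}(y/n^2)^s=x^n c_{n,s}(y/x^2)^s/s!$, so that the very same estimate recovers the Taylor series of $e^{y/x^2}$.
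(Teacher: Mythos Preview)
Your argument is correct and in fact more careful than the paper's. The paper does not work with the explicit series and Tannery's theorem; it takes the umbral route that you list only as an alternative at the very end. Concretely, the paper writes $H_n(x,y/n^2)=(x+{}_y\hat h/n)^n\theta_0=x^n\bigl(1+{}_y\hat h/(nx)\bigr)^n\theta_0$, invokes the formal limit $(1+a/n)^n\to e^{a}$ with $a={}_y\hat h/x$, and then appeals to \eqref{yhgenfunScaling} with $t=1/x$ to obtain $x^n e^{{}_y\hat h/x}\theta_0=x^n e^{y/x^2}$. What your approach buys is a clean, self-contained justification via the bound $0\le c_{n,r}\le 1$ and dominated convergence, whereas the paper's umbral manipulation is quicker but purely heuristic (the step $(1+{}_y\hat h/(nx))^n\theta_0\to e^{{}_y\hat h/x}\theta_0$ is asserted rather than proved). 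Your final paragraph already sketches the paper's derivation, so you have in effect given both proofs.
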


\begin{proof}
Along with Eq. \eqref{HermLagbisScaling}, Eq. \eqref{yhgenfunScaling} yields 

\begin{equation}\label{key}
\lim_{n\rightarrow\infty} H_n\left(x, \dfrac{y}{n^2} \right)=
\lim_{n\rightarrow\infty} \left(x+\dfrac{{}_y\hat{h}}{n}\right) ^n\theta_{0}=
\lim_{n\rightarrow\infty}x^n\left(1+ \dfrac{{}_y\hat{h}}{nx}\right) ^n\simeq x^n e^{\frac{{}_y\hat{h}}{x}}\theta_{0}=x^n e^{\frac{y}{x^2}}.
\end{equation}
\end{proof}

\begin{cor}
If we use the above asymptotic limit and the integral representation in eq. \eqref{eqPH} we obtain

%\begin{equation}\label{key}
% \lim_{n\rightarrow\infty}\dfrac{1}{n!^2}P_n\left(x, \dfrac{y}{n^2} \right)\simeq \dfrac{1}{\sqrt{\pi}\;n!^2}\int_0^\infty e^{-\left(s+\frac{y}{x^2s} \right) }s^{n-\frac{1}{2}}ds.
%\end{equation}

\begin{equation}\label{key}
\lim_{n\rightarrow\infty}P_n\left(x, \dfrac{y}{n^2} \right)\simeq \dfrac{(-x)^n}{\sqrt{\pi}\;n!}\int_0^\infty e^{-\left(s+\frac{y}{x^2s} \right) }s^{n-\frac{1}{2}}ds.
\end{equation}
\end{cor}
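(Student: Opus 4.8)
The plan is to combine the integral representation \eqref{eqPH} for the two-variable Legendre polynomials with the asymptotic result just established for the Hermite polynomials. First I would write, starting from \eqref{eqPH},
\begin{equation*}
P_n\left(x,\dfrac{y}{n^2}\right)=\dfrac{1}{\sqrt{\pi}\,n!}\int_0^\infty e^{-s}s^{-\frac{1}{2}}H_n\left(-sx,-\dfrac{sy}{n^2}\cdot\dfrac{1}{s^{2}}\cdot s^{2}\right)ds,
\end{equation*}
so that the second argument of $H_n$ appears in the form $\tfrac{1}{n^2}\cdot(\text{quantity})$ needed to apply the previous proposition. Concretely, setting the first variable to $-sx$ and the second to $-\,\tfrac{y/s}{n^2}$, the previous proposition gives $H_n(-sx,-\tfrac{y}{s\,n^2})\simeq(-sx)^n e^{\frac{-y/s}{(sx)^2}\cdot s^2}$; after simplifying the exponent one obtains $(-sx)^n e^{-\frac{y}{x^2 s}}$.

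The second step is to substitute this asymptotic form inside the integral. This yields
\begin{equation*}
\lim_{n\to\infty}P_n\left(x,\dfrac{y}{n^2}\right)\simeq\dfrac{1}{\sqrt{\pi}\,n!}\int_0^\infty e^{-s}s^{-\frac{1}{2}}(-sx)^n e^{-\frac{y}{x^2 s}}ds
=\dfrac{(-x)^n}{\sqrt{\pi}\,n!}\int_0^\infty e^{-\left(s+\frac{y}{x^2 s}\right)}s^{n-\frac{1}{2}}ds,
\end{equation*}
which is exactly the claimed identity, since $(-sx)^n s^{-1/2}=(-x)^n s^{n-1/2}$ and the two exponential factors combine into $e^{-(s+y/(x^2 s))}$.

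The main obstacle, and the point that deserves a word of justification, is the interchange of the limit $n\to\infty$ with the integral over $s$: the asymptotic equivalence for $H_n$ is pointwise in its arguments, and here the argument itself carries the factor $s$, so one is really using a uniform (in $s$, on compact subsets of $(0,\infty)$, together with a tail estimate) version of the Hermite asymptotics. In the spirit of the operational/umbral manipulations used throughout the paper, I would present the computation formally — treating $\simeq$ as the paper does elsewhere — and remark that the convergence of the resulting integral $\int_0^\infty e^{-(s+y/(x^2s))}s^{n-1/2}ds$ (a Bessel-type integral, finite for all $n\in\mathbb{N}$ and $y/x^2>0$) is what makes the exchange legitimate; for $y/x^2<0$ one restricts to the range where the integral still converges, exactly as the domain restrictions in the earlier generating-function identities. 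No further machinery beyond \eqref{HermLagbisScaling}, \eqref{yhgenfunScaling}, the preceding proposition, and \eqref{eqPH} is needed.
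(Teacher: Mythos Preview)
Your approach is exactly the one the paper intends: substitute the Hermite asymptotic into the integral representation \eqref{eqPH} and simplify. One small slip to clean up: in your intermediate step you write the second Hermite argument as $-\tfrac{y/s}{n^{2}}$, but from \eqref{eqPH} it is $-\tfrac{sy}{n^{2}}$; applying the proposition with $X=-sx$, $Y=-sy$ then gives directly $H_n(-sx,-sy/n^{2})\simeq(-sx)^{n}e^{-sy/(s^{2}x^{2})}=(-sx)^{n}e^{-y/(sx^{2})}$, which is the expression you correctly end up with.
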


Let us finally note that the Gegenbauer polynomials \cite{L.C.Andrews,Abramovitz}, specified by the generating function 

\begin{equation}\label{key}
\sum_{n=0}^\infty t^n C_n^{(\gamma)}(x)=\dfrac{1}{\left(1-2x t +t^2 \right)^\gamma }\;\;, \qquad  \quad \mid -2x t\mid +t^2<1,
\end{equation}
can be expressed in terms of two variable Hermite as \cite{DGMR} 

\begin{equation}\label{Cngamm}
C_n^{(\gamma)}(x)=\dfrac{1}{\Gamma(\gamma)\;n!}\int_0^\infty e^{-s}s^{\gamma - 1}H_n(2 s x, -s)\;ds.
\end{equation}

By using the same arguments as before we can infer the further following identity.

\begin{prop}
	$\forall \lambda,x\in\mathbb{R}$, $\forall n\in\mathbb{N}$, $\forall \gamma\in\mathbb{R}^+$ 
\begin{equation}\label{key}
C_n^{(\gamma)}(\lambda x)=\sum_{r=0}^n \dfrac{(\lambda^2 -1)^r \lambda^{n-2r}}{r!\;2^r} \partial_x ^r C_{n-r}^{(\gamma)}(x).
\end{equation}
\end{prop}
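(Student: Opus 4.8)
The plan is to mimic the proof of the preceding Proposition for $P_n(\lambda x,y)$, replacing the Legendre integral representation \eqref{eqPH} by the Gegenbauer one \eqref{Cngamm}. First I would invoke the variable-dilatation property \eqref{VD} of the two-variable Hermite polynomials to rewrite $H_n(2s\lambda x,-s)$ inside the integral \eqref{Cngamm}: writing $a=\lambda$ gives $H_n(2s\lambda x,-s)=\lambda^n H_n(2sx,-s/\lambda^2)$, so that
\begin{equation}\label{key}
C_n^{(\gamma)}(\lambda x)=\frac{\lambda^n}{\Gamma(\gamma)\,n!}\int_0^\infty e^{-s}s^{\gamma-1}H_n\!\left(2sx,-\frac{s}{\lambda^2}\right)ds.
\end{equation}
Next I would use the operational identity \eqref{OD}, $H_n(u,v)=e^{v\partial_u^2}u^n$, to represent $H_n(2sx,-s/\lambda^2)=e^{-\frac{s}{\lambda^2}\partial_u^2}u^n\big|_{u=2sx}$, and split the exponential as $e^{-\frac{s}{\lambda^2}\partial_u^2}=e^{\frac{s}{\lambda^2}(\lambda^2-1)\partial_u^2}\,e^{-s\partial_u^2}$ (the two heat operators commute), so that the ``extra'' factor is $e^{\frac{s(\lambda^2-1)}{\lambda^2}\partial_u^2}$ while the remaining $e^{-s\partial_u^2}u^n$ reconstitutes $H_n(2sx,-s)$ after the rescaling is undone; this is exactly the device used in the proof of \eqref{Pnl}.

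Then I would expand the extra heat exponential in its power series, $e^{\frac{s(\lambda^2-1)}{\lambda^2}\partial_u^2}=\sum_{r\ge 0}\frac{1}{r!}\big(\frac{s(\lambda^2-1)}{\lambda^2}\big)^r\partial_u^{2r}$, interchange sum and integral, and use the Hermite repeated-derivative rule \eqref{RD} in the form $\partial_u^{2r}H_n(u,v)=\frac{n!}{(n-2r)!}H_{n-2r}(u,v)$ — or, matching the bookkeeping in \eqref{Pnl}, first peel off $r$ derivatives to lower the Hermite index and absorb the other $r$ into shifting $\gamma$ and the power of $s$, so that after the substitution the surviving integral is precisely $\Gamma(\gamma)(n-r)!\,C_{n-r}^{(\gamma)}(x)$ up to the scalar prefactors. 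Collecting the powers of $\lambda$ (one $\lambda^n$ from the dilatation and one $\lambda^{-2r}$ from the split) gives $\lambda^{n-2r}$, the factor $(\lambda^2-1)^r$ comes from the expansion, and the $2^r$ in the denominator is the combinatorial residue of converting $2r$ Hermite derivatives with respect to $u$ into $r$ derivatives with respect to $x$ together with the factor $2$ hidden in $u=2sx$; this reproduces the claimed identity.

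The main obstacle I anticipate is purely the bookkeeping: tracking the precise powers of $s$, $\lambda$, and $2$ so that each truncated integral $\int_0^\infty e^{-s}s^{\gamma+\text{(shift)}-1}\,\cdots\,ds$ lands on the correct $C_{n-r}^{(\gamma)}(x)$ with the right normalization, exactly as in the analogous $P_n(\lambda x,y)$ computation. The analytic steps — commuting the heat operators, interchanging sum and integral — are harmless under the stated hypotheses $\gamma>0$ (which guarantees convergence of the gamma integral) and $\lambda,x$ real; no genuinely new idea beyond the techniques already displayed for the Legendre case is needed, so I would keep the write-up short and refer back to the proof of \eqref{Pnl} for the parallel details.
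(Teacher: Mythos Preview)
Your plan is correct and is exactly the route the paper intends: it proves this proposition only by the phrase ``by using the same arguments as before,'' referring to the computation for $P_n(\lambda x,y)$, and your outline is the faithful transcription of that computation with the Gegenbauer integral representation \eqref{Cngamm} in place of \eqref{eqPH}. One wording slip to clean up: in the surviving integral the exponent $\gamma$ is \emph{not} shifted --- the factor $s^{r}$ coming from the expansion of $e^{\frac{s(\lambda^{2}-1)}{\lambda^{2}}\partial_u^{2}}$ is exactly cancelled by the $(2s)^{-r}$ arising when you convert $\partial_u^{r}$ (with $u=2sx$) into $\partial_x^{r}$, so the remaining integral is literally $\int_0^\infty e^{-s}s^{\gamma-1}H_{n-r}(2sx,-s)\,ds=\Gamma(\gamma)(n-r)!\,C_{n-r}^{(\gamma)}(x)$, which is the conclusion you already state; just drop the phrase ``shifting $\gamma$.''
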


\begin{obs}
The well-known asymptotic properties \cite{Temme}

\begin{equation}\label{key}
\lim_{\gamma\rightarrow\infty}\left( \gamma^{-\frac{n}{2}}C_n^{(\gamma)}\left(\dfrac{x}{\sqrt{\gamma}} \right) \right)=\dfrac{1}{n!}H_n (x) 
\end{equation}
is just a consequence of eq. \eqref{Cngamm}, we find indeed

\begin{equation}\label{key}
\begin{split}
\lim_{\gamma\rightarrow\infty}\left( \gamma^{-\frac{n}{2}}C_n^{(\gamma)}\left(\dfrac{x}{\sqrt{\gamma}} \right) \right)&=
\dfrac{1}{n!}\lim_{\gamma\rightarrow\infty}\int_0^\infty e^{-s}\dfrac{s^{\gamma - 1}}{\Gamma(\gamma)}\gamma^{-\frac{n}{2}}H_n\left(2s\dfrac{x}{\sqrt{\gamma}},-s \right)ds=\\
& =
\dfrac{1}{n!}\lim_{\gamma\rightarrow\infty}\int_0^\infty e^{-s}\dfrac{s^{\gamma - 1}}{\Gamma(\gamma)} 
H_n\left(2s\dfrac{x}{\gamma},\dfrac{-s}{\gamma} \right)ds\simeq
\dfrac{1}{n!}H_n(2x,-1).
\end{split}
\end{equation}
\end{obs}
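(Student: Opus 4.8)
The plan is to read the claimed limit directly off the integral representation \eqref{Cngamm}, converting the asymptotic statement into an elementary computation involving ratios of Euler Gamma functions. First I would set $x\to x/\sqrt{\gamma}$ in \eqref{Cngamm} and multiply by $\gamma^{-n/2}$, so that the prefactor $\gamma^{-n/2}$ can be absorbed into the Hermite polynomial through the dilatation property \eqref{VD} with scaling parameter $a=\gamma^{-1/2}$; this replaces $\gamma^{-n/2}H_n(2sx/\sqrt\gamma,-s)$ by $H_n(2sx/\gamma,-s/\gamma)$ and yields
\[
\gamma^{-\frac n2}C_n^{(\gamma)}\!\left(\frac{x}{\sqrt\gamma}\right)=\frac1{n!}\int_0^\infty \frac{e^{-s}s^{\gamma-1}}{\Gamma(\gamma)}\,H_n\!\left(\frac{2sx}{\gamma},-\frac{s}{\gamma}\right)ds .
\]

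Next I would note that $e^{-s}s^{\gamma-1}/\Gamma(\gamma)\,ds$ is a probability measure (it integrates to one by \eqref{GrindEQ__3_}) concentrating around $s=\gamma$, so heuristically $s/\gamma\to 1$ and the Hermite argument tends to $H_n(2x,-1)$. To make this rigorous without invoking dominated convergence, I would expand the two-variable Hermite polynomial in its finite series \eqref{GrindEQ__2_},
\[
H_n\!\left(\frac{2sx}{\gamma},-\frac{s}{\gamma}\right)=n!\sum_{r=0}^{\lfloor n/2\rfloor}\frac{(2x)^{n-2r}(-1)^r}{(n-2r)!\,r!}\,\frac{s^{n-r}}{\gamma^{n-r}},
\]
and integrate term by term, using $\int_0^\infty e^{-s}s^{\gamma-1+k}\,ds/\Gamma(\gamma)=\Gamma(\gamma+k)/\Gamma(\gamma)$ with $k=n-r$.

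This reduces the statement to the single asymptotic fact $\Gamma(\gamma+k)/(\gamma^{k}\Gamma(\gamma))\to 1$ as $\gamma\to\infty$ (the rising factorial $\gamma(\gamma+1)\cdots(\gamma+k-1)\sim\gamma^k$), applied to each of the finitely many terms $k=n-r$. Passing to the limit then gives
\[
\lim_{\gamma\to\infty}\gamma^{-\frac n2}C_n^{(\gamma)}\!\left(\frac{x}{\sqrt\gamma}\right)=\sum_{r=0}^{\lfloor n/2\rfloor}\frac{(2x)^{n-2r}(-1)^r}{(n-2r)!\,r!}=\frac1{n!}H_n(2x,-1),
\]
and I would close by recalling that the ordinary single-variable Hermite polynomial is recovered from the two-variable family as $H_n(x)=H_n(2x,-1)$, which is exactly the asserted right-hand side $H_n(x)/n!$.

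The main obstacle is purely the justification of the limit: the formal step ``$s/\gamma\to 1$'' conceals an interchange of limit and integral over the unbounded range $s\in(0,\infty)$, where the Hermite integrand grows polynomially in $s/\gamma$. The term-by-term route above circumvents this cleanly, because the sum is finite and each moment integral is computed \emph{exactly} before the limit is taken, so that the whole analysis collapses to the elementary Gamma-ratio asymptotic; the ``$\simeq$'' of the statement is thereby promoted to a genuine equality in the limit.
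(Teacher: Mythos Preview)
Your proposal is correct and follows exactly the paper's route: substitute into the integral representation \eqref{Cngamm}, absorb the prefactor $\gamma^{-n/2}$ via the dilatation identity \eqref{VD}, and read off $H_n(2x,-1)/n!$ from the concentration of the gamma density $e^{-s}s^{\gamma-1}/\Gamma(\gamma)$ around $s=\gamma$. The paper stops at the heuristic ``$\simeq$'' in that last step, whereas you supply the extra rigor by expanding $H_n$ as a finite sum and using the elementary asymptotic $\Gamma(\gamma+k)/(\gamma^{k}\Gamma(\gamma))\to 1$; this is a welcome tightening but not a different strategy.
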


In this paper we have provided an account of operational properties allowing a general tool to study various properties of ordinary and generaized polynomials in a unified and straightforward framework. A forthcoming publication will be addressed to a more systematic use applied to wider classes of special functions.\\

\textbf{Acknowledgements}\\

The work of Dr. S. Licciardi was supported by an Enea Research Center individual fellowship.\\
Prof. R.M. Pidatella wants to thank the fund of University of Catania "Metodi gruppali e umbrali per modelli di diffusione e trasporto" for partial support of this work.\\

\textbf{Author Contributions}\\

Conceptualization: S.L., G.D.; methodology: S.L., G.D.; data curation: S.L.; validation: S.L., G.D., R.M.P.; formal analysis: S.L., G.D.; writing - original draft preparation: S.L., G.D.; writing - review and editing: S.L. .\\

{}

\end{document}